\documentclass[3p,onecolumn]{elsarticle}

\usepackage{lineno,hyperref}
\modulolinenumbers[5]

\journal{Topology and its Applications}









\bibliographystyle{elsarticle-num}
\usepackage{enumerate}
\usepackage{amsmath}
\usepackage{amscd,amssymb,graphics,lineno}
\usepackage{mathrsfs} 
\usepackage{mathtools, nccmath}
\usepackage{stmaryrd}
\usepackage{mathrsfs,verbatim} 
\usepackage{setspace}
\usepackage{relsize}
\usepackage{enumitem}

\usepackage{calc}

\usepackage{amsthm}

\newtheorem{theorem}{Theorem}[section]
\newtheorem{corollary}[theorem]{Corollary} 
\newtheorem{lemma}[theorem]{Lemma}
\newtheorem{proposition}[theorem]{Proposition}
\theoremstyle{definition}
\theoremstyle{remark}
\newtheorem{remark}[theorem]{Remark}

\newtheorem{example}[theorem]{Example}
\numberwithin{equation}{section}
\newtheorem{definition}[theorem]{Definition}


\theoremstyle{definition}

\newcommand{\R}{{\mathbb R}}
\def\H{{\mathcal H}}
\newcommand{\I}{{\mathbb I}}

\newcommand{\s}{\mathbb{S}}
\newcommand{\N}{{\mathbb N}}
\DeclareMathOperator{\Samuel}{S}

\DeclareMathOperator{\match}{match}
\DeclareMathOperator{\Lip}{Lip}
\DeclareMathOperator{\RUCB}{RUCB}
\DeclareMathOperator{\LUCB}{LUCB}
\DeclareMathOperator{\UCB}{UCB}

\newcommand{\Q}{{\mathbb Q}}

\newcommand{\e}{{\epsilon}}

\newcommand{\defeq}{\mathrel{\mathop:}=} 

\DeclareMathOperator{\B}{B}
\DeclareMathOperator{\spt}{spt}

\def\norm #1{{\left\Vert\,#1\,\right\Vert}}
\newcommand{\abs}[1]{\lvert#1\rvert}
\def\fr(#1/#2){{^{\mbox{$_#1$}}\!/_{#2}}}
\font\mathf=cmex10
\def\va{\hbox{\mathf\char'76}}
\def\pipe{\hbox{${\raise.28em\va\atop\raise.50em\va}$}}

\begin{document}

\begin{frontmatter}

\title{On invariant means and pre-syndetic subgroups
}

\author[mymainaddress1,mysecondaryaddress1]
{Vladimir G. Pestov}

\address[mymainaddress1]
{Departamento de Matem\'atica, Universidade Federal da Para\'\i ba, Jo\~ao Pessoa, PB, Brasil \footnote{Holder of the DCR-A fellowship 300050/2022-4 of the Program of Scientific and Technological Development of the State of Para\'\i ba by CNPq and FAPESQ.}}
\address[mysecondaryaddress1]
{Department of Mathematics and Statistics,
       University of Ottawa,
       Ottawa, ON,  
Canada \footnote{Emeritus Professor.}}
\ead{vladimir.pestov@uottawa.ca}

\author[mymainaddress2]
{Friedrich Martin Schneider}
\address[mymainaddress2]
{Institute of Discrete Mathematics and Algebra, TU Bergakademie Freiberg, 09596 Freiberg, Germany}
\ead{martin.schneider@math.tu-freiberg.de}

\begin{abstract}
Beyond the locally compact case, equivalent notions of amenability diverge, and some properties no longer hold, for instance amenability is not inherited by topological subgroups. This investigation is guided by some amenability-type properties of groups of paths and loops. It is shown that a version of amenability called skew-amenability is inherited by pre-syndetic subgroups in the sense of Basso and Zucker (in particular, by co-compact subgroups). It follows that co-compact subgroups of amenable topological groups whose left and right uniformities coincide are amenable. We discuss a version of amenability belonging to P. Malliavin and M.-P. Malliavin: the existence of a mean on bounded Borel functions that is invariant under the left action of a dense subgroup. We observe that this property is in general strictly stronger than amenability, and establish for it Reiter- and F\o lner-type criteria. Finally, there is a review of open problems.
\end{abstract}

\begin{keyword}
Topological groups \sep invariant means \sep amenable groups \sep skew-amenable groups \sep pre-syndetic subgroups \sep co-compact subgroups \sep groups amenable in the sense of Malliavin--Malliavin 
\MSC[2010] 22A10 \sep 43A07 \sep 54H15
\end{keyword}

\end{frontmatter}

\section{Introduction}
We start by recalling that a topological group $G$ is called amenable \cite{dlH2,greenleaf,GdlH} if every continuous action of $G$ on a compact space $X$ admits an invariant regular Borel probability measure. Equivalently, this is the case if the Banach space $\RUCB(G)$ of bounded real-valued right uniformly continuous functions on $G$ admits a left-invariant mean: there is a positive linear functional $\phi\colon\RUCB(G)\to\R$ of norm one such that for every $g\in G$, $\phi(^gf)=\phi(f)$, where ${\,}^gf(x)=f(g^{-1}x)$ for all $x\in G$. Here, following Bourbaki (\cite{bourbaki}, Ch. III, p. 20), we call a bounded function $f$ right uniformly continuous if the orbit map
\[G\ni g \, \longmapsto \, {}^gf\in\ell^{\infty}(G)\]
is continuous. Equivalently, for every $\e>0$ there is a neighbourhood of identity $V$ in $G$ with $\abs{f(vx)-f(x)}<\e$ whenever $v\in V$ and $x\in G$.
Also, a function $f\colon G\to\R$ is right uniformly continuous if and only if $f$ is uniformly continuous with regard to some continuous right-invariant pseudometric on $G$.

In the case where $G$ is locally compact, this definition is equivalent to a great many other properties (``approximately $10^{10^{10}}$,'' according to \cite{BO}, p. 48). Let us mention just three.

\begin{theorem}\label{theorem:subgroups}
For a locally compact group $G$, the following are equivalent.
\begin{enumerate}
\item $G$ is amenable.
\item $G$ admits a right-invariant mean on $\RUCB(G)$.
\item $G$ admits a mean on the space $\B(G)$ of all bounded real-valued Borel functions that is invariant under the left multiplication by elements of a dense subgroup of $G$.
\end{enumerate}
\label{th:non-equiv}
\end{theorem}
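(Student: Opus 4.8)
The plan is to prove the two implications $(3)\Rightarrow(1)$ and $(1)\Rightarrow(3)$, together with the equivalence $(1)\Leftrightarrow(2)$, taking as a black box the classical characterizations of amenability for locally compact groups (see \cite{greenleaf}): for such a group the existence of a left-invariant mean on $\RUCB(G)$ is equivalent to the F\o lner condition, and hence to the existence of a left-invariant mean on $L^{\infty}(G)$ with respect to Haar measure, and the same holds in the right-handed form. I record for use below the notation $R_gf(x)=f(xg)$ for the right translation and $\check f(x)=f(x^{-1})$ for the reflection; recall from the definitions that $\RUCB(G)$ is invariant under both ${}^{g}(\cdot)$ and $R_g$ and contains the constants, so that $\RUCB(G)\subseteq\B(G)$.

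For $(3)\Rightarrow(1)$, which in fact needs no local compactness, I would start from a mean $m$ on $\B(G)$ that is invariant under left multiplication by a dense subgroup $H$ and restrict it to $\RUCB(G)$. Since this subspace is left-translation invariant and contains the constants, the restriction is again a mean, and it remains $H$-invariant. The point is to upgrade $H$-invariance to full left invariance: for $f\in\RUCB(G)$ the orbit map $g\mapsto{}^{g}f$ is $\norm{\cdot}_{\infty}$-continuous, so if $g\in G$ and $h_i\to g$ is a net in $H$ then ${}^{h_i}f\to{}^{g}f$ uniformly, and because a mean is norm continuous, $m({}^{g}f)=\lim_i m({}^{h_i}f)=m(f)$. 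Thus the restriction is a left-invariant mean on $\RUCB(G)$, which is exactly amenability of $G$.

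For $(1)\Rightarrow(3)$ I would use the cited black box to turn amenability into a left-invariant mean $M$ on $L^{\infty}(G)$, and then pull it back along the canonical quotient $q\colon\B(G)\to L^{\infty}(G)$ that sends a bounded Borel function to its Haar-almost-everywhere class. This map is linear, positive, unital, and commutes with left translation because left translation preserves Haar-null sets, so $M\circ q$ is a mean on $\B(G)$ invariant under left multiplication by every element of $G$, in particular by those of any dense subgroup. The remaining equivalence $(1)\Leftrightarrow(2)$ is the step I expect to be the main obstacle: unlike the discrete case, one cannot pass from a left- to a right-invariant mean on $\RUCB(G)$ by the reflection $f\mapsto\check f$, since this map carries $\RUCB(G)$ onto $\LUCB(G)$ rather than onto itself. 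I would therefore route the symmetry through $L^{\infty}(G)$, where reflection is available: there $f\mapsto\check f$ is a well-defined positive unital isometry, because the preimage of a Haar-null set under inversion is null, and it intertwines left and right translations, $\check{({}^{g}f)}=R_g\check f$ and $\check{(R_gf)}={}^{g}\check f$, so it converts a left-invariant mean on $L^{\infty}(G)$ into a right-invariant one and back. Combining this with the cited equivalence between invariant means on $\RUCB(G)$ and on $L^{\infty}(G)$, in both handednesses, yields $(1)\Leftrightarrow(2)$.

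The one genuinely technical ingredient, and the place where local compactness and Haar measure are indispensable, is the cited passage between invariant means on $\RUCB(G)$ and on $L^{\infty}(G)$. A naive attempt to extend a mean by a single convolution already reveals the difficulty: for a probability density $\phi\in L^1(G)$ the function $\phi * f$ lies in $\RUCB(G)$ but the assignment $f\mapsto\phi * f$ fails to intertwine left translations, the discrepancy being governed by the modular function, whereas the equivariant convolution $f\mapsto f * \check\phi$ lands in $\LUCB(G)$ instead. Reconciling these two features is precisely the content of the classical F\o lner-type argument that I would invoke rather than reprove, and it is the reason the left--right passage cannot be performed directly on $\RUCB(G)$.
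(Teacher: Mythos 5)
Your proposal is correct and takes essentially the same route as the paper: the paper's entire proof is the citation of Thm.~2.2.1 in \cite{greenleaf} ``combined with the relevant definitions,'' and your argument is precisely that reduction with the routine bridges written out --- restriction to $\RUCB(G)$ plus the density/norm-continuity upgrade for (3)$\Rightarrow$(1) (the very argument the paper itself spells out in Section~\ref{s:MM}), pullback along the quotient $\B(G)\to L^{\infty}(G)$ for (1)$\Rightarrow$(3), and the reflection performed on $L^{\infty}(G)$ (equivalently, passage to the opposite group) to mediate between the left- and right-handed forms of the classical equivalence. Your closing remark correctly identifies where the black box is genuinely needed, so there is nothing to flag.
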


The equivalence of those properties follows from Thm. 2.2.1 in \cite{greenleaf} combined with the relevant definitions. All those properties are pairwise distinct for more general topological groups. While the choice for what equivalent property to call amenability is motivated by its usefulness in many applications (see e.g. the survey \cite{GdlH} for motivation), the properties (2) and (3) also emerge naturally in the context of path and loop groups, see \cite{MM} and \cite{VP2020}, respectively. We call groups satisfying (2) {\em skew-amenable}, and (3), {\em Malliavin--Malliavin amenable,} or {\em M--M amenable}.

It is a fundamental fact that every topological subgroup\footnote{A \emph{topological subgroup} of a topological group $G$ is any subgroup of $G$ equipped with the relative topology inherited from~$G$. In particular, topological subgroups are not required to be closed in the ambient topological group.} of an amenable locally compact group is amenable: while this is usually stated only for closed subgroups (see \cite{greenleaf}, Th. 2.3.2), the closedness assumption is superfluous, since a topological group is amenable if and only if so is some (equivalently, every) dense topological subgroup (see \cite{rickert}, Cor.~4.5). However, beyond the locally compact case, amenability is no longer inherited by topological subgroups. 
For example, the group $S_{\infty}$ of all self-bijections of the countable discrete set $\omega$ with the product topology induced from $\omega^{\omega}$ is an amenable Polish group
(see for example \cite{P06}, Sect. 2.4 and 6.3, and also Sect. \ref{s:MM} below). 
And this group contains as a closed subgroup a copy of the free group $F_2$ on two generators with the discrete topology (as easily seen by identifying $F_2$ with $\omega$ and letting it act upon itself by left multiplication). There is a stronger example \cite{CT} of an amenable topological group $G$ satisfying the additional property that the left and the right uniformities are equal (i.e., $G$ is a SIN group, a group with small invariant neighbourhoods), and still containing a copy of $F_2$ with discrete topology. 

Those two examples also show that none of the versions of amenability listed in Theorem \ref{th:non-equiv} are in general inherited by closed subgroups: the group $S_{\infty}$ satisfies (3), and the example of Carderi and Thom, being a SIN group, satisfies (2).

We will obtain the following result, which will already have found the first application in~\cite{AST}.

\begin{corollary} Let $G$ be an amenable topological group whose left and right uniformities coincide. Let $H$ be a topological subgroup that is co-compact, that is, the factor-space $G/H$ is compact. Then $H$ is amenable.
\label{c:corol}
\end{corollary}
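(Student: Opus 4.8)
The plan is to deduce the corollary from the main theorem on skew-amenability, using two auxiliary observations: that amenability and skew-amenability coincide for groups whose left and right uniformities agree, and that this agreement of uniformities passes to arbitrary topological subgroups.

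First I would establish a \emph{flip lemma}: if a topological group $K$ has coinciding left and right uniformities, then $K$ is amenable if and only if it is skew-amenable. Since inversion $x\mapsto x^{-1}$ interchanges the left and right uniformities of $K$, under the hypothesis it is uniformly continuous for the common uniformity, so the involution $j\colon f\mapsto\check f$ with $\check f(x)=f(x^{-1})$ maps $\RUCB(K)$ onto itself. Writing $R_gf$ for the right translate $x\mapsto f(xg)$, a one-line computation yields $j(R_gf)={}^g(jf)$. Hence, if $\phi$ is a left-invariant mean on $\RUCB(K)$, then $\phi\circ j$ is a right-invariant mean, and symmetrically; this gives the claimed equivalence.

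Next I would observe that $H$ is itself a group with coinciding left and right uniformities. The left (respectively right) uniformity of the topological subgroup $H$ is exactly the restriction to $H\times H$ of the left (respectively right) uniformity of $G$, because for $x,y\in H$ the condition $x^{-1}y\in V$ (respectively $xy^{-1}\in V$) depends only on $V\cap H$. Since these two uniformities coincide on $G$ by hypothesis, so do their restrictions to $H$, and therefore $H$ is a SIN group.

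Finally I would chain the pieces together. By the flip lemma applied to $G$, amenability of $G$ upgrades to skew-amenability of $G$. Since $H$ is co-compact, the main theorem (skew-amenability is inherited by co-compact, indeed pre-syndetic, subgroups) yields that $H$ is skew-amenable. Because $H$ too has coinciding left and right uniformities, the flip lemma applied to $H$ converts this back into amenability of $H$, as required. The genuinely substantive input is the main theorem, which is assumed here; within this derivation the point demanding care is the flip lemma, namely verifying that $j$ preserves right uniform continuity and intertwines right with left translations, while the subgroup-heredity of the SIN property, although elementary, is the observation that actually bridges skew-amenability back to amenability.
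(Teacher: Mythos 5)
Your proposal is correct and follows essentially the same route as the paper: the paper deduces Corollary~\ref{c:corol} from Theorem~\ref{th:coco} precisely via the observation that for SIN groups amenability and skew-amenability are equivalent, which is your flip lemma. The only difference is that you spell out the details the paper leaves implicit---the inversion involution $j$ intertwining left and right translations on $\RUCB$, and the elementary fact that the SIN property passes to topological subgroups---both of which you verify correctly.
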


One naturally occuring example of this situation is the group $C(\s^1,K)$ of continuous loops with values in a compact group $K$, equipped with the topology of uniform convergence. This group is sitting as a co-compact subgroup inside the group $C(\I,K)$ of all continuous paths with values in $K$.

The above result is a corollary of a (rather) more general result involving a different version of amenability. A topological group $G$ is called {\em skew-amenable}\footnote{This property is easily seen to be equivalent to condition~(2) in Theorem~\ref{theorem:subgroups} (see also~\cite[Rem.~3.7(2)]{JS}).} \cite{VP2020,JS,CJS} if and only if it admits a left-invariant mean on the space $\LUCB(G)$ of all {\em left uniformly continuous bounded real-valued functions} on $G$. A function $f \colon G \to \mathbb{R}$ is {\em left uniformly continuous} if for every $\e>0$ and $x\in G$, we have $\abs{f(x)-f(xv)}<\e$ when $v\in V$ and $V$ is a sufficiently small neighbourhood of identity. It is equivalent to the uniform continuity of $f$ in relation with a suitable continuous left-invariant pseudometric.

Skew-amenability was formally introduced by the first-named author in \cite{VP2020} (adopting the term suggested by the second-named author), but it had already made cameo appearance, nameless, in \cite{GP07,P06}. As we have already mentioned, for locally compact groups skew-amenability is simply one of many equivalent forms of amenability (see for example Thm. 2.2.1 in \cite{greenleaf}). However, this notion diverges from amenability for ``infinite-dimensional'' (non locally compact) groups. For example, the unitary group $U(\ell^2)$ with the strong operator topology is amenable (as was first noticed in \cite{dlH2}, and we will also discuss it in Sect. \ref{s:MM}), but not skew-amenable, see e.g. Ex. 3.6.3 in \cite{P06}. For the same reason, the infinite symmetric group $S_{\infty}$, which is amenable, is not skew-amenable. It is still unknown to us whether every skew-amenable group is amenable.
 
From a certain viewpoint, skew-amenability may be a more useful notion than amenability, in view of the following result.

\begin{theorem}[\cite{GP07}, Prop. 4.5]
Let $\pi$ be a strongly continuous unitary representation of a skew-amenable topological group $G$ in a Hilbert space $\H$. Then the algebra $B(\H)$ of bounded linear operators on $\H$ admits a $\pi$-invariant state, that is, a positive linear functional $\phi \colon B(\H) \to \mathbb{C}$ with $\phi(1)=1$ such that $\phi(\pi_{g}^{\ast}T\pi_g)=\phi(T)$ for every $T \in B(\H)$ and each $g\in G$. 
\label{p:gpskew}
\end{theorem}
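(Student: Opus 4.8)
The plan is to build the invariant state directly from a left-invariant mean $m$ on $\LUCB(G)$, which exists precisely because $G$ is skew-amenable. First I would fix once and for all a unit vector $\xi_0 \in \H$, and for each operator $T \in B(\H)$ consider the matrix-coefficient function
\[
c_T \colon G \to \C, \qquad c_T(g) = \langle \pi_g^{\ast} T \pi_g \xi_0, \xi_0\rangle = \langle T\pi_g\xi_0, \pi_g\xi_0\rangle,
\]
and then set $\phi(T) \defeq m(c_T)$, where $m$ is extended to complex-valued functions by $m(f) = m(\Re f) + i\,m(\Im f)$. Everything then reduces to two things: verifying that each $c_T$ actually lies in $\LUCB(G)$, so that $\phi$ is well defined, and reading off the three defining properties of a $\pi$-invariant state.

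The first and main step is to show $c_T \in \LUCB(G)$. Boundedness is immediate, since $\abs{c_T(g)} \le \norm{T}$ by unitarity of $\pi_g$. For left uniform continuity I would estimate a right perturbation $c_T(gv) - c_T(g)$: writing $\pi_{gv} = \pi_g\pi_v$ and inserting a telescoping middle term gives
\[
c_T(gv) - c_T(g) = \langle T\pi_g(\pi_v\xi_0 - \xi_0), \pi_g\pi_v\xi_0\rangle + \langle T\pi_g\xi_0, \pi_g(\pi_v\xi_0 - \xi_0)\rangle,
\]
whence, since each $\pi_g$ and $\pi_v$ is norm-preserving,
\[
\abs{c_T(gv) - c_T(g)} \le 2\,\norm{T}\,\norm{\pi_v\xi_0 - \xi_0}.
\]
The right-hand side does not depend on $g$, so strong continuity of $\pi$ at the identity (giving $\pi_v\xi_0 \to \xi_0$ in norm as $v \to e$) yields, for every $\e>0$, a neighbourhood $V$ of the identity with $\abs{c_T(gv) - c_T(g)} < \e$ for all $g \in G$ and $v \in V$; in particular $\Re c_T$ and $\Im c_T$ are both in $\LUCB(G)$. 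I expect this estimate to be the heart of the argument. Note that it is exactly the unitarity of the $\pi_g$ together with the placement of the perturbation $\pi_v$ \emph{inside} $\pi_g$ that makes the bound uniform in $g$; an attempt to control the left perturbation $c_T(vg) - c_T(g)$ would instead apply $\pi_v - I$ to the vector $\pi_g\xi_0$ ranging over the whole orbit, where strong continuity is only pointwise. This is the structural reason that skew-amenability (a mean on $\LUCB$), rather than plain amenability, is the correct hypothesis.

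It then remains to read off the properties of $\phi$ from those of $m$. Linearity of $\phi$ is clear, since $T \mapsto c_T$ and $m$ are both linear. Positivity holds because $T \ge 0$ forces $c_T(g) = \langle T\pi_g\xi_0, \pi_g\xi_0\rangle \ge 0$ for every $g$, so $c_T$ is a nonnegative real function and $m(c_T) \ge 0$; and $\phi(1) = m\bigl(g \mapsto \norm{\pi_g\xi_0}^2\bigr) = m(\mathbf 1) = 1$. Finally, for $\pi$-invariance I would fix $h \in G$ and compute, using $\pi_{hg} = \pi_h\pi_g$,
\[
c_{\pi_h^{\ast} T \pi_h}(g) = \langle (\pi_h\pi_g)^{\ast} T (\pi_h\pi_g)\xi_0, \xi_0\rangle = c_T(hg) = {}^{h^{-1}}c_T(g).
\]
Since $\LUCB(G)$ is invariant under the left-translation action and $m$ is left-invariant, it follows that $\phi(\pi_h^{\ast} T \pi_h) = m\bigl({}^{h^{-1}}c_T\bigr) = m(c_T) = \phi(T)$, which is the desired invariance.
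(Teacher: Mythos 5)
Your proof is correct and is essentially the argument behind the cited result \cite[Prop.~4.5]{GP07} (the paper itself states the theorem without proof): compress $B(\H)$ along the orbit of a fixed unit vector, verify via the estimate $\abs{c_T(gv)-c_T(g)}\le 2\norm{T}\,\norm{\pi_v\xi_0-\xi_0}$ that each matrix coefficient $c_T$ lies in $\LUCB(G)$, and apply the left-invariant mean, whose left invariance matches the identity $c_{\pi_h^{\ast}T\pi_h}={}^{h^{-1}}c_T$. Your remark on why the perturbation must sit inside $\pi_g$ correctly identifies the structural reason skew-amenability, rather than amenability, is the right hypothesis; nothing needs to be added.
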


In other words, every unitary representation of a skew-amenable topological group is amenable in the sense of Bekka \cite{Bek1}. This is in general false for non locally compact amenable topological groups, for example, the tautological unitary representation of the unitary group $U(\ell^2)$ is not amenable (\cite{P06}, Ex. 3.6.14 and before). For this reason, it may well be that the open question by Carey and Grundling \cite{CG} asking if the groups $C^{\infty}(X,K)$ of infinitely smooth maps from a compact manifold $X$ to a compact Lie group $K$, equipped with the $C^{\infty}$-topology, are amenable, should also ask whether those groups are skew-amenable. (See a more detailed discussion in \cite{VP2020}.)

Basso and Zucker have introduced the notion of a {\em pre-syndetic} subgroup $H$ of a topological group $G$ \cite[Definition~8.1]{BassoZucker}: so is called a subgroup such that, for every identity neighborhood $U$ in $G$, there is a finite subset $E \subseteq G$ with $G = EUH$. For example, it is easy to see that every co-compact subgroup is pre-syndetic. However, already co-precompact subgroups need not be pre-syndetic. On the other hand, pre-syndetic subgroups need not be co-precompact. Such examples, far from being artificially cooked up, emerge naturally as groups of symmetries of topological and combinatorial objects. As Andy Zucker (to whom the authors are grateful) has informed us, the first such example emerged in the work of Kwiatkowska \cite{K}, and recently more examples have appeared in the work by Basso and Tsankov \cite{BT}. This class of subgroups seems to be of considerable importance, allowing to better understand the topological dynamics of ``infinite-dimensional'' groups. For this, our next new result could be of interest.

\begin{theorem}
A pre-syndetic topological subgroup of a skew-amenable topological group is skew-amenable.
\label{th:coco}
\end{theorem}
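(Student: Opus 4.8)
The plan is to reduce to the ``right-handed'' form of skew-amenability and then push a right-invariant mean from $G$ down to $H$ through an approximate, equivariant extension operator built from the precompactness of the coset space that pre-syndeticity provides. First I would record two reformulations. Via the inversion homeomorphism $g\mapsto g^{-1}$, which exchanges the left and right uniformities, skew-amenability of $G$ is equivalent to the existence of a \emph{right}-invariant mean $\mu$ on $\RUCB(G)$ (condition~(2) of Theorem~\ref{th:non-equiv}), and correspondingly it suffices to produce a right-invariant mean on $\RUCB(H)$. Unwinding the definition, pre-syndeticity says exactly that the left-coset space $G/H$ is precompact in the uniformity for which $\pi\colon G\to G/H$ is uniformly continuous out of the right uniformity of $G$: for every identity neighbourhood $U$ there is a finite $E=\{g_1,\dots,g_n\}$ with $G=EUH$. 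Fixing such $U$ and $E$, and using that a finite uniform cover admits a subordinate uniformly continuous partition of unity, I obtain $\RUCB$ functions $\bar\rho_1,\dots,\bar\rho_n$ on $G$ that are right-$H$-invariant (they factor through $G/H$), satisfy $\sum_i\bar\rho_i\equiv 1$, and have $\supp\bar\rho_i\subseteq g_iUH$. Finiteness, hence pre-syndeticity rather than a weaker covering property, is what keeps the modulus of continuity of the $\bar\rho_i$ uniformly controlled.

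Next I would assemble an approximate extension operator. For each $i$ choose a right-$H$-equivariant selection $h_i\colon\supp\bar\rho_i\to H$ with $h_i(ga)=h_i(g)a$ for $a\in H$ and $g_i^{-1}g\,h_i(g)^{-1}\in U$ (pick coset representatives and propagate equivariantly), and define $T_U\colon\RUCB(H)\to\ell^\infty(G)$ by $T_U\phi=\sum_i\bar\rho_i\cdot(\phi\circ h_i)$. Then $T_U$ is linear, positive and unital, and a direct computation using right-$H$-invariance of the $\bar\rho_i$ and equivariance of the $h_i$ shows it is \emph{exactly} right-$H$-equivariant: $(T_U\phi)_a=T_U(\phi_a)$ for all $a\in H$. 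The one thing $T_U\phi$ is \emph{not} is genuinely right uniformly continuous, since the pointwise selections $h_i$ may jump; quantitatively, moving $g$ by a small left factor changes $h_i(g)$ by an element of $H$ lying in a set of ``size $U^{-1}U$'', so the uniform-continuity defect of $T_U\phi$ is bounded by an oscillation $\omega_U(\phi)$ of $\phi$ over right-$(U^{-1}U)$-neighbourhoods, and crucially $\omega_U(\phi)\to 0$ as $U\downarrow\{e\}$, precisely because $\phi$ is right uniformly continuous.

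Then I would push the mean down and control the error. Extend $\mu$ by Hahn--Banach to a mean $\hat\mu$ on $\ell^\infty(G)$ and set $\Theta_U\defeq\hat\mu\circ T_U$, a mean on $\RUCB(H)$. To estimate its defect of invariance I would approximate the bounded, nearly uniformly continuous function $T_U\phi$ by a genuine $G_U\in\RUCB(G)$ with $\norm{G_U-T_U\phi}_\infty\le C\,\omega_U(\phi)$, obtaining $G_U$ by a Lipschitz regularization (inf-convolution with $\lambda\, d$ for a continuous right-invariant pseudometric $d$ generating the right uniformity and $\lambda$ large), which lands in $\RUCB(G)$, commutes with right translations, and costs only $O(\omega_U(\phi))$ in sup norm. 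Writing $T_U\phi=G_U+r_U$ with $\norm{r_U}_\infty\le C\,\omega_U(\phi)$, and using that $\hat\mu$ extends $\mu$ together with right-invariance of $\mu$, I get $\hat\mu\bigl((G_U)_a-G_U\bigr)=\mu\bigl((G_U)_a-G_U\bigr)=0$, whence $\abs{\Theta_U(\phi_a)-\Theta_U(\phi)}=\abs{\hat\mu\bigl((T_U\phi)_a-T_U\phi\bigr)}\le 2C\,\omega_U(\phi)$.

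Finally, letting $U$ shrink, the $\Theta_U$ lie in the weak$^\ast$-compact convex set of means on $\RUCB(H)$, and any weak$^\ast$-cluster point $\psi$ along $U\downarrow\{e\}$ is again a mean; since $\omega_U(\phi)\to 0$ for each fixed $\phi$, the estimate forces $\psi(\phi_a)=\psi(\phi)$ for all $a\in H$ and $\phi\in\RUCB(H)$, so $\psi$ is a right-invariant mean and $H$ is skew-amenable. The genuinely delicate point is reconciling two conflicting demands on the extension: exact equivariance (which forces the $h_i$ to depend on $g$ and thereby destroys uniform continuity) against membership in $\RUCB(G)$ (needed to apply the invariant $\mu$, since every $\ell^\infty$-extension of $\mu$ is non-invariant). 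The regularization step that repairs uniform continuity within an $O(\omega_U(\phi))$ error while keeping the cancellation $\hat\mu\bigl((G_U)_a-G_U\bigr)=0$ intact is where I expect the argument to require the most care; the remainder is bookkeeping over the passage to the limit $U\downarrow\{e\}$.
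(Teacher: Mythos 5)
Your proposal is correct in substance, but it takes a genuinely different route from the paper's. You build a finite, right-$H$-invariant partition of unity subordinate to the cover $G=EUH$, combine it with equivariant but wildly discontinuous coset selections $h_i$ into an exactly $H$-equivariant extension operator $T_U$, extend the right-invariant mean to $\ell^\infty(G)$ by Hahn--Banach, repair the continuity defect by inf-convolution at cost $O(\omega_U(\phi))$, and take a weak$^\ast$ cluster point as $U\downarrow\{e\}$. The paper never leaves $\RUCB(G)$ and never invokes Hahn--Banach: it extends each $f\in\Lip_1(H,d\vert_{H\times H})$ by the McShane formula $f^d(x)=\inf_{h\in H}f(h)+d(x,h)$, which is \emph{simultaneously} $1$-Lipschitz (hence RUC) and exactly right-$H$-equivariant (Lemma~\ref{lemma:extension}(5)) --- the two demands you describe as conflicting are reconciled there at once, because the Lipschitz scale in $d$ is divorced from the direction of translation; it then localizes near $H$ with the bump function $\chi_{H,t^{-1}d}$ of Lemma~\ref{lemma:characteristic.function}, uses pre-syndeticity plus a pigeonhole over the finite set $E$ to manufacture, at each scale, a $G$-right-invariant mean charging the bump (Lemma~\ref{lemma:pre-syndetic}), and defines the mean on $\RUCB(H)$ as an ultrafilter limit of conditional means $f\mapsto\mu_{d,t}(f^d\chi_{H,t^{-1}d})/\mu_{d,t}(\chi_{H,t^{-1}d})$, with approximate linearity controlled only on the support of the bump (Lemma~\ref{lemma:extension}(2)--(4)). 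The approaches are dual in how they spend pre-syndeticity: you use the finite cover to make $T_U$ unital with $U$-controlled selections; the paper uses it to make the bump have positive mean under a left translate of the given mean. Both ultimately draw their error control from right-invariance of the pseudometric --- in your argument this is precisely why the regularization of $T_U(\phi\circ\rho_a)$ equals $G_U\circ\rho_a$, which is what keeps your cancellation $\hat\mu(G_U\circ\rho_a-G_U)=\mu(G_U\circ\rho_a)-\mu(G_U)=0$ intact.

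One misstatement to correct, though it is not load-bearing: pre-syndeticity is \emph{not} equivalent to precompactness of $G/H$ in the quotient of the right uniformity (entourages $\{(xH,uxH)\}$, as in the paper's footnote); precompactness there is co-precompactness, $G=UEH$, and the paper explicitly separates the two notions in both directions (point stabilizers in $S_\infty$ and $U(\ell^2)$ are co-precompact but not pre-syndetic; the examples of Kwiatkowska and Basso--Tsankov are pre-syndetic but not co-precompact). Your construction survives because you only ever use the definitional covering $G=EUH$ --- translates $g_iUH$, not $Ug_iH$ --- together with finiteness of $E$ to tame the conjugations $g_i^{-1}Vg_i$ when verifying that $\{g_iUH\}$ refines to a uniform cover for the right uniformity and that the selection defect $h_i(vg)h_i(g)^{-1}$ lands in $H\cap U^{-1}U'U$; had you genuinely leaned on precompactness of the quotient, the argument would fail on exactly the examples above.
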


This result is proved in Section \ref{s:presyndetic}, throughout which we adopt the following equivalent definition of skew-amenability: a topological group is skew-amenable if and only if it admits a right-invariant mean on the space $\RUCB(G)$ of bounded right uniformly continuous functions. 

Now, if a topological group is SIN, then amenability and skew-amenability are equivalent properties, and hence Corollary~\ref{c:corol} follows immediately. 
Previously the conclusion of Theorem~\ref{th:coco} was known for normal co-compact subgroups (\cite{VP2020}, Thm. 12). 

In Setions \ref{s:MM} and \ref{s:reiter} we look at yet another amenability-type property. First of all, recall that for locally compact groups, amenability is equivalent to the following, formally stronger, property: the existence of a left-invariant mean on all bounded Borel functions (Thm. 2.2.1 in \cite{greenleaf}). For general topological groups, this property is indeed strictly stronger than amenability: it implies amenability, but also skew-amenability. However, there is a similar, but weaker property: the existence of a mean on bounded real-valued Borel functions on $G$ that is invariant under the action of some dense subgroup of $G$. This property implies amenability as well, and for locally compact groups is again equivalent to amenability. For general topological groups, it is strictly stronger than amenabilty, though we are still unable to distinguish the two properties for Polish groups. This property essentially appears in the paper \cite{MM} by Paul Malliavin and Marie-Paule Malliavin, where it was established for the groups of continuous paths and loops with values in a compact Lie group.
For this reason, we propose to name it {\em Malliavin--Malliavin amenability,} or {\em M--M amenability,} for short. We will see that this property is shared by some common infinite-dimensional groups.
In Section \ref{s:reiter} we are establishing criteria for a topological group to be M--M amenable, in the spirit of Reiter and F\o lner type criteria known for the usual amenability of topological groups of the most general nature \cite{ST}.  

The paper ends with a short Section \ref{:summary}, summarising some related open questions.

\section{Pre-syndetic subgroups of skew-amenable topological groups}
\label{s:presyndetic}

If $G$ is a group, then let $\lambda_{g} \colon G \to G, \, x \mapsto gx$ and $\rho_{g} \colon G \to G, \, x \mapsto xg$ for any $g \in G$. Given a topological group $G$, we let $\Delta(G)$ denote the set of all bounded right-invariant continuous pseudo-metrics on $G$. Let $d$ be a pseudo-metric on a set $X$. For all $x \in X$ and $r \in \R_{>0}$, we define \begin{displaymath}
	B_{d}(x,r) \defeq \{ y \in X \mid d(x,y) < r \}, \qquad B_{d}[x,r] \defeq \{ y \in X \mid d(x,y) \leq r \} .
\end{displaymath} Finally, let us consider the set \begin{displaymath}
	\left. \Lip_{1}(X,d) \, \defeq \, \left\{ f \in \R^{X} \, \right\vert \forall x,y \in X \colon \, \vert f(x) - f(y) \vert \leq d(x,y) \right\}
\end{displaymath} of all real-valued \emph{$1$-Lipschitz} functions on $(X,d)$.

\begin{remark}\label{remark:pseudometrics} Let $G$ be a topological group. \begin{itemize}
	\item[$(1)$] If $d \in \Delta(G)$, then $\Lip_{1}(G,d) \subseteq \RUCB(G)$.
	\item[$(2)$] If $f \in \RUCB(G)$, then $f$ is $1$-Lipschitz with respect to the bounded right-invariant continuous pseudo-metric $G \times G \to \R, \, (x,y) \mapsto \sup\nolimits_{g \in G} \vert f(xg) - f(yg) \vert$. Together with~(1), this implies that \begin{displaymath}
					\qquad \RUCB(G) \, = \, \bigcup \{ \Lip_{1}(G,d) \mid d \in \Delta(G) \} .
				\end{displaymath}
	\item[$(3)$] Let $H$ be a topological subgroup of $G$. Since the map $\RUCB(G) \to \RUCB(H), \, f \mapsto f\vert_{H}$ is surjective by~\cite[Theorem~2.22]{PachlBook}, assertion~(2) entails that \begin{displaymath}
					\qquad \RUCB(H) \, = \, \bigcup \{ \Lip_{1}(H,{d\vert_{H \times H}}) \mid d \in \Delta(G) \} .
				\end{displaymath}
\end{itemize} \end{remark}

\begin{lemma}\label{lemma:characteristic.function} Let $G$ be a topological group, let $d \in \Delta(G)$, and let $H \leq G$. Then \begin{displaymath}
	\chi_{H,d} \colon \, G \, \longrightarrow \, [0,1], \quad x \, \longmapsto \, \left(1-\inf\nolimits_{h \in H} d(x,h)\right) \vee 0
\end{displaymath} is $1$-Lipschitz with respect to $d$. Furthermore, \begin{itemize}
	\item[$(1)$] $\chi_{H,d}(xh) = \chi_{H,d}(x)$ for all $x \in G$ and $h \in H$, 
	\item[$(2)$] $\chi_{H,d}^{-1}((1-t,1]) = B_{d}(e,t)H$ for every $t \in (0,1]$.
\end{itemize} \end{lemma}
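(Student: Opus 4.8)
The plan is to verify the claimed properties of $\chi_{H,d}$ directly from its defining formula, since the function is built from the distance-to-$H$ function $\delta_H(x) \defeq \inf_{h \in H} d(x,h)$ in a standard way. First I would establish the $1$-Lipschitz property. Recall that for any nonempty set, the function $x \mapsto \delta_H(x)$ is $1$-Lipschitz with respect to $d$: given $x, y \in G$ and any $h \in H$, the triangle inequality gives $\delta_H(x) \leq d(x,h) \leq d(x,y) + d(y,h)$, and taking the infimum over $h$ yields $\delta_H(x) \leq d(x,y) + \delta_H(y)$; by symmetry $\lvert \delta_H(x) - \delta_H(y)\rvert \leq d(x,y)$. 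Then $\chi_{H,d} = (1 - \delta_H) \vee 0$ is a composition of $\delta_H$ with the map $t \mapsto (1-t)\vee 0$ on $\R$, which is itself $1$-Lipschitz (it is a truncation of an isometry-up-to-sign). Since a composition of $1$-Lipschitz maps is $1$-Lipschitz, $\chi_{H,d}$ is $1$-Lipschitz with respect to $d$.

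For property $(1)$, the key is the right-invariance of $d$. Fix $x \in G$ and $h \in H$. The right-invariance gives $d(xh, h') = d(x, h'h^{-1})$ for every $h' \in H$. Since $H$ is a subgroup, the map $h' \mapsto h'h^{-1}$ is a bijection of $H$ onto itself, so $\{d(xh, h') : h' \in H\} = \{d(x, h'') : h'' \in H\}$ as sets, whence $\delta_H(xh) = \delta_H(x)$. Applying the same truncation as before gives $\chi_{H,d}(xh) = \chi_{H,d}(x)$.

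For property $(2)$, fix $t \in (0,1]$ and I would unwind both inclusions. We have $\chi_{H,d}(x) > 1-t$ if and only if $(1 - \delta_H(x)) \vee 0 > 1-t$; since $1 - t < 1$, this holds exactly when $1 - \delta_H(x) > 1-t$, i.e. $\delta_H(x) < t$. Now $\delta_H(x) < t$ means $\inf_{h \in H} d(x,h) < t$, which is equivalent to the existence of $h \in H$ with $d(x,h) < t$, i.e. $d(xh^{-1}, e) < t$ by right-invariance, i.e. $xh^{-1} \in B_d(e,t)$, i.e. $x \in B_d(e,t)h \subseteq B_d(e,t)H$. Conversely any $x \in B_d(e,t)H$ has this form for some $h \in H$, giving $\delta_H(x) < t$. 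Hence $\chi_{H,d}^{-1}((1-t,1]) = B_d(e,t)H$.

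None of the three verifications presents a genuine obstacle; the only point requiring care is the systematic use of right-invariance of $d$ (as opposed to left-invariance) in both $(1)$ and $(2)$, which is exactly why $d$ is taken from $\Delta(G)$, the set of right-invariant pseudo-metrics. The truncation by $\vee 0$ must also be handled carefully in $(2)$ to confirm that the level set above $1-t$ is unaffected by the clamping at $0$, which holds precisely because $t \leq 1$.
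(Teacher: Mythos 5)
Your proof is correct and takes essentially the same route as the paper's: the paper treats the $1$-Lipschitz claim as immediate (your composition of $\delta_H$ with $s \mapsto (1-s)\vee 0$ just supplies the standard details), and its verifications of $(1)$ and $(2)$ are exactly your right-invariance computations with $h' \mapsto h'h^{-1}$ permuting $H$ and $d(x,h)<t \Leftrightarrow xh^{-1}\in B_d(e,t)$. One small precision: in $(2)$ the clamping at $0$ is harmless because $1-t \geq 0$ (i.e.\ $t \leq 1$), not because $1-t<1$ as stated mid-argument --- though your closing paragraph already gives the correct reason.
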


\begin{proof} Of course, $\chi_{H,d}$ is $1$-Lipschitz with respect to $d$.
	
(1) If $x \in G$ and $h \in H$, then \begin{align*}
	\chi_{H,d}(xh) \, &= \, \left(1-\inf\nolimits_{h' \in H} d(xh,h')\right) \vee 0 \, = \, \left(1-\inf\nolimits_{h' \in H} d\left(x,h'h^{-1}\right)\right) \vee 0 \\
	& = \, \left(1-\inf\nolimits_{h' \in Hh^{-1}} d(x,h')\right) \vee 0 \, = \, \left(1-\inf\nolimits_{h' \in H} d(x,h')\right) \vee 0 \, = \, \chi_{H,d}(x) .
\end{align*}

(2) If $t \in (0,1]$, then \begin{align*}
	\chi_{H,d}^{-1}((1-t,1]) \, &= \, \{ x \in G \mid \chi_{H,d}(x) > 1-t \} \, = \, \{ x \in G \mid \exists h \in H \colon \, d(x,h) < t \} \\
	&= \, \left\{ x \in G \left\vert \, \exists h \in H \colon \, d\!\left(xh^{-1},e\right) < t \right\} \right. \, = \, \left\{ x \in G \left\vert \, \exists h \in H \colon \, xh^{-1} \in B_{d}(e,t) \right\} \right. \\
	& = \, B_{d}(e,t)H . \qedhere
\end{align*} \end{proof}

\begin{lemma}\label{lemma:extension} Let $G$ be a topological group, let $d \in \Delta(G)$, and let $H \leq G$. \begin{itemize}
	\item[$(1)$] If $f \in \Lip_{1}(H,{d\vert_{H \times H}})$, then \begin{displaymath}
		\qquad f^{d} \colon \, G \, \longrightarrow \, \R, \quad x \, \longmapsto \, \inf\nolimits_{h \in H} f(h)+d(x,h)
	\end{displaymath} is a member of $\Lip_{1}(G,d)$ with $f^{d}\vert_{H} = f$.
	\item[$(2)$] Let $d_{0} \in \Delta(G)$ with $d_{0} \leq d$. If $f \in \Lip_{1}(H,{d_{0}\vert_{H \times H}})$ and $t \in (0,1]$, then \begin{displaymath}
		\qquad \left\lvert f^{d} \right\rvert \chi_{H,t^{-1}d} \, \leq \, (\Vert f \Vert_{\infty} +t)\chi_{H,t^{-1}d} .
	\end{displaymath}
	\item[$(3)$] Let $d_{0} \in \Delta(G)$ with $2d_{0} \leq d$. If $f,g \in \Lip_{1}(H,{d_{0}\vert_{H \times H}})$ and $t \in (0,1]$, then \begin{displaymath}
					\qquad \left\lvert (f+g)^{d} - \left(f^{d} + g^{d}\right) \right\rvert \chi_{H,t^{-1}d} \, \leq \, 3t \chi_{H,t^{-1}d} .
				\end{displaymath}
	\item[$(4)$] Let $d_{0} \in \Delta(G)$ and $r \in \R$ with $\vert r \vert d_{0} \leq d$. If $f \in \Lip_{1}(H,{d_{0}\vert_{H \times H}})$ and $t \in [0,1)$, then \begin{displaymath}
		\qquad \left\lvert (rf)^{d} - rf^{d} \right\rvert \chi_{H,t^{-1}d} \, \leq \, (\vert r \vert + 1)t \chi_{H,t^{-1}d} .
	\end{displaymath}
	\item[$(5)$] If $f \in \Lip_{1}(H,{d\vert_{H \times H}})$ and $h \in H$, then $(f \circ \rho_{h})^{d} = f^{d} \circ \rho_{h}$.
\end{itemize} \end{lemma}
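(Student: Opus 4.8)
The common engine for all five assertions is a single \emph{collapse estimate}: for every $\phi \in \Lip_{1}(H,{d\vert_{H\times H}})$, every $x \in G$ and every $h \in H$, one has $\lvert \phi^{d}(x) - \phi(h) \rvert \leq d(x,h)$. The upper bound $\phi^{d}(x) \leq \phi(h) + d(x,h)$ is the defining infimum evaluated at the single index $h$; for the lower bound, the $1$-Lipschitzness of $\phi$ for $d$ and the triangle inequality give, for every $h' \in H$, $\phi(h') + d(x,h') \geq \phi(h) - d(h,h') + d(x,h') \geq \phi(h) - d(x,h)$, and taking the infimum over $h'$ yields $\phi^{d}(x) \geq \phi(h) - d(x,h)$. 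I would record this first, since it already contains part~(1) (the McShane--Whitney extension): specialising to $x \in H$ and $h = x$ gives $\phi^{d}\vert_{H} = \phi$, while $\phi^{d} \in \Lip_{1}(G,d)$ follows directly from the infimum, since $\phi^{d}(x) \leq \phi(h') + d(x,y) + d(y,h')$ for all $h'$ forces $\phi^{d}(x) \leq \phi^{d}(y) + d(x,y)$. I would also note, via Lemma~\ref{lemma:characteristic.function}(2) applied to $t^{-1}d$, that $\{\chi_{H,t^{-1}d} > 0\} = \{ x \in G \mid \inf_{h \in H} d(x,h) < t \}$, so at every point $x$ of this set there is some $h^{\ast} \in H$ with $d(x,h^{\ast}) < t$.

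With these preliminaries, each of~(2)--(4) reduces to fixing such an $h^{\ast}$ at a point $x$ with $\chi_{H,t^{-1}d}(x) > 0$ and feeding the relevant functions into the collapse estimate. For~(2) I would combine $f^{d}(x) \leq f(h^{\ast}) + d(x,h^{\ast}) \leq \Vert f \Vert_{\infty} + t$ with the trivial lower bound $f^{d}(x) \geq \inf_{h} f(h) \geq -\Vert f \Vert_{\infty}$. For~(3) the key observation is that $2d_{0} \leq d$ makes $f$, $g$ and $f+g$ simultaneously members of $\Lip_{1}(H,{d\vert_{H\times H}})$ (the last because $f+g \in \Lip_{1}(H,{2d_{0}\vert_{H\times H}})$); applying the collapse estimate to all three at the \emph{same} $h^{\ast}$ and cancelling the shared value $(f+g)(h^{\ast}) = f(h^{\ast}) + g(h^{\ast})$ leaves $\lvert (f+g)^{d}(x) - f^{d}(x) - g^{d}(x)\rvert \leq 3\,d(x,h^{\ast}) < 3t$. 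Part~(4) is identical in spirit: one applies the estimate to $rf \in \Lip_{1}(H,{\lvert r\rvert d_{0}\vert_{H\times H}})$ and to $f$, whereupon the common value $rf(h^{\ast})$ cancels and the triangle inequality yields $(1+\lvert r\rvert)\,d(x,h^{\ast}) < (1+\lvert r\rvert)t$.

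Part~(5) needs no estimate at all: it is the exact equivariance of the extension under right translation by $H$. Since $d$ is right-invariant, $f\circ\rho_{h} \in \Lip_{1}(H,{d\vert_{H\times H}})$, and the substitution $h'' = h'h$ (a bijection of $H$) together with $d(xh,h'h) = d(x,h')$ gives $f^{d}(xh) = \inf_{h''\in H}\bigl(f(h'') + d(xh,h'')\bigr) = \inf_{h'\in H}\bigl(f(h'h) + d(x,h')\bigr) = (f\circ\rho_{h})^{d}(x)$. The obstacle I anticipate is not conceptual but lies in the bookkeeping of Lipschitz constants: the collapse estimate is valid only for functions that are genuinely $1$-Lipschitz for the \emph{extension} metric $d$, so in each part I must check that the stated domination hypothesis delivers this for \emph{every} function fed into the estimate, including the non-linear combinations $f+g$ and $rf$ whose deviation from linearity is exactly what~(3) and~(4) quantify. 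The most delicate case is~(4): there one also needs $f$ itself to be $1$-Lipschitz for $d$, i.e. $d_{0} \leq d$ (which is automatic once $\lvert r\rvert \geq 1$), so the inequality $\lvert r\rvert d_{0} \leq d$ is best used in tandem with $d_{0} \leq d$.
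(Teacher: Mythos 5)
Your proof is correct and takes essentially the same route as the paper's: the inf-convolution (McShane--Whitney) extension, a point $h^{\ast}\in H$ with $d(x,h^{\ast})<t$ wherever $\chi_{H,t^{-1}d}>0$, and triangle-inequality bookkeeping through that point --- your ``collapse estimate'' $\lvert \phi^{d}(x)-\phi(h)\rvert\leq d(x,h)$ is just a packaged form of the paper's use of $\phi^{d}\in\Lip_{1}(G,d)$ together with $\phi^{d}\vert_{H}=\phi$, yielding the same constants $3t$ and $(\vert r\vert+1)t$. Your caveat on~(4) is moreover a genuine correction rather than mere caution: the paper's own proof asserts $f\in\Lip_{1}(H,{\vert r\vert d_{0}\vert_{H\times H}})$, which fails when $\vert r\vert<1$ (and the stated inequality can then actually fail, e.g.\ with $H=G$, $d=\vert r\vert d_{0}$, and $f$ $1$-Lipschitz for $d_{0}$ but not for $d$), so the hypothesis $d_{0}\leq d$ you add in tandem with $\vert r\vert d_{0}\leq d$ is needed --- harmlessly so, since in the lemma's only application one has $(2\vee\vert r\vert)d_{0}\leq d$.
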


\begin{proof} (1) Let $f \in \Lip_{1}(H,{d\vert_{H \times H}})$. Note that $f^{d} \in \Lip_{1}(G,d)$. Furthermore, $f^{d}(h) \leq f(h) + d(h,h) = f(h)$ for every $h \in H$. On the other hand, if $h \in H$, then $f(h) -f(h') \leq \vert f(h) -f(h') \vert \leq d(h,h')$ and thus $f(h) \leq f(h')+d(h,h')$ for all $h' \in H$, that is, $f(h) \leq f^{d}(h)$. Hence, $f^{d}\vert_{H} = f$.

(2) Let $d_{0} \in \Delta(G)$ with $d_{0} \leq d$, let $f \in \Lip_{1}(H,{d_{0}\vert_{H \times H}})$, and let $t \in (0,1]$. If $x \in G$ and $\chi_{H,t^{-1}d}(x) > 0$, then there exists $h \in H$ such that $d(x,h) < t$, whence $\vert f^{d}(x) \vert \leq f(h) + d(x,h) < \Vert f \Vert_{\infty} + t$. Since $\chi_{H,t^{-1}d} \geq 0$, it follows that $\left\lvert f^{d} \right\rvert \chi_{H,t^{-1}d} \leq (\Vert f \Vert_{\infty} +t)\chi_{H,t^{-1}d}$.

(3) Let $d_{0} \in \Delta(G)$ with $2d_{0} \leq d$, let $f,g \in \Lip_{1}(H,{d_{0}\vert_{H \times H}})$, and let $t \in (0,1]$. First of all, we observe that $f,g,{f+g} \in \Lip_{1}(H,{2d_{0}\vert_{H \times H}}) \subseteq \Lip_{1}(H,{d\vert_{H \times H}})$. In particular, \begin{displaymath}
	(f+g)^{d}\vert_{H} \, \stackrel{(1)}{=} \, f+g \, \stackrel{(1)}{=} \, {f^{d}\vert_{H}} + {g^{d}\vert_{H}} \, = \, \left( f^{d} + g^{d} \right)\vert_{H}
\end{displaymath} and $f^{d},g^{d},(f+g)^{d} \in \Lip_{1}(G,d)$ by~(1). Now, if $x \in G$ and $\chi_{H,t^{-1}d}(x) > 0$, then there exists $h \in H$ such that $d(x,h) < t$, whence \begin{align*}
	\left\lvert (f+g)^{d}(x) - \left( f^{d} + g^{d}\right)(x) \right\rvert \, &\leq \, \left\lvert (f+g)^{d}(x) - (f+g)^{d}(h) \right\rvert + \left\lvert (f+g)^{d}(h) - \left( f^{d} + g^{d}\right)(h) \right\rvert \\
	& \qquad \qquad + \left\lvert f^{d}(h) - f^{d}(x) \right\rvert + \left\lvert g^{d}(h) - g^{d}(x) \right\rvert \\
	& \leq \, d(x,h) + 0 + d(h,x) + d(h,x) \, < \, 3t .
\end{align*} Since $\chi_{H,t^{-1}d} \geq 0$, we conclude that $\left\lvert (f+g)^{d} - \left(f^{d} + g^{d}\right) \right\rvert \chi_{H,d} \leq 3t \chi_{H,d}$.

(4) Let $d_{0} \in \Delta(G)$ and $r \in \R$ such that $\vert r \vert d_{0} \leq d$, let $f \in \Lip_{1}(H,{d_{0}\vert_{H \times H}})$, and let $t \in (0,1]$. Note that $f,rf \in \Lip_{1}(H,{\vert r \vert d_{0}\vert_{H \times H}}) \subseteq \Lip_{1}(H,{d\vert_{H \times H}})$. In particular, \begin{displaymath}
	(rf)^{d}\vert_{H} \, \stackrel{(1)}{=} \, rf \, \stackrel{(1)}{=} \, r\left({f^{d}\vert_{H}}\right) \, = \, \left( r f^{d} \right)\vert_{H}
\end{displaymath} and $f^{d},(rf)^{d} \in \Lip_{1}(G,d)$ by~(1). Now, if $x \in G$ and $\chi_{H,t^{-1}d}(x) > 0$, then there exists $h \in H$ such that $d(x,h) < t$, whence \begin{align*}
	\left\lvert (rf)^{d}(x) - rf^{d}(x) \right\rvert \, &\leq \, \left\lvert (rf)^{d}(x) - (rf)^{d}(h) \right\rvert + \left\lvert (rf)^{d}(h) - rf^{d}(h) \right\rvert + \left\lvert rf^{d}(h) - rf^{d}(x) \right\rvert \\
	& \leq \, d(x,h) + 0 + \vert r \vert d(h,x) \, < \, (\vert r \vert + 1)t .
\end{align*} Since $\chi_{H,t^{-1}d} \geq 0$, we conclude that $\left\lvert (f+g)^{d} - \left(f^{d} + g^{d}\right) \right\rvert \chi_{H,t^{-1}d} \leq (\vert r \vert + 1) t \chi_{H,t^{-1}d}$.

(5) If $f \in \Lip_{1}(H,{d\vert_{H \times H}})$ and $h \in H$, then \begin{align*}
	(f \circ \rho_{h})^{d}(x) \, &= \, \inf\nolimits_{h' \in H} f(h'h)+d(x,h') \, = \, \inf\nolimits_{h' \in H} f(h'h)+d(xh,h'h) \\
	& = \, \inf\nolimits_{h' \in Hh} f(h')+d(xh,h') \, = \, \inf\nolimits_{h' \in H} f(h')+d(xh,h') \, = \, f^{d}(xh) \, = \, \left( {f^{d}} \circ {\rho_{h}} \right)(x) 
\end{align*} for every $x \in G$, i.e., $(f \circ \rho_{h})^{d} = f^{d} \circ \rho_{h}$. \end{proof}

A subgroup $H$ of a topological group $G$ is called \emph{pre-syndetic}~\cite[Definition~8.1]{BassoZucker} if, for every identity neighborhood $U$ in $G$, there is a finite subset $E \subseteq G$ with $G = EUH$. As established by Zucker~\cite[Proposition~6.6]{Zucker}, a subgroup $H$ of a topological group $G$ is pre-syndetic if and only if the Samuel compactification\footnote{Here, the set $G/H = \{ xH \mid x \in G \}$ is being viewed as a uniform space carrying the uniformity generated by the basic entourages of the form $\{ (xH,uxH) \mid x \in G, \, u \in U \}$, where $U$ runs trough the set of all identity neighborhoods in $G$.} $\Samuel(G/H)$ is minimal with respect to the $G$-action given by \begin{displaymath}
	(g\xi)(f) \, \defeq \, \xi(f \circ \lambda_{g}) \qquad (g \in G, \, \xi \in \Samuel(G/H), \, f \in \UCB(G/H)) .
\end{displaymath} Of course, if $H$ is a co-compact subgroup of a topological group $G$, then the compact $G$-space $\Samuel(G/H)$ is isomorphic to $G/H$, thus minimal. Hence, every co-compact subgroup of a topological group is pre-syndetic. On the other hand, not every co-precompact subgroup is pre-syndetic, even in the Polish group setting. For example, the stabilizer $H=\mathrm{St}_\xi$ of a point $\xi$ of the unit sphere in the unitary group $G=U(\ell^2)$ with the strong operator topology is co-precompact according to a result by L. Stoyanov \cite{stoyanov}: the Samuel compactification of the quotient space $G/H$ is canonically isomorphic to the closed unit ball in $\ell^2$ equipped with the weak topology (see also \cite{P06}, Example 6.2.8). This ball is not minimal as it contains a fixed point, hence $H$ is not co-precompact in the Polish group $G$. An even simpler argument shows that the stabilizer $H$ of a point of $\N$ in the infinite symmetric group $G=S_{\infty}$ is precompact, yet the Samuel compactification of $G/H$ is the one-point compactification of the natural numbers, hence not minimal. And finally, as we have mentioned in the Introduction, there exist natural and important examples of pre-syndetic subgroups that are not co-precompact \cite{K,BT}.

\begin{lemma}\label{lemma:pre-syndetic} Let $H$ be a pre-syndetic subgroup of a skew-amenable topological group $G$. Then, for every $d \in \Delta(G)$, there exists a $G$-right-invariant mean $\mu \colon \RUCB(G) \to \R$ such that $\mu (\chi_{H,d}) > 0$. \end{lemma}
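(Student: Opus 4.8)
The plan is to start from a right-invariant mean supplied by skew-amenability and then \emph{correct} it by a single left translation so that it charges $\chi_{H,d}$. Since $d \in \Delta(G)$, Remark~\ref{remark:pseudometrics}(1) gives $\chi_{H,d} \in \Lip_{1}(G,d) \subseteq \RUCB(G)$, and one checks that $\RUCB(G)$ is invariant under left translations $f \mapsto {}^{g}f$ (the orbit map $g \mapsto {}^{g}({}^{g_{0}}f) = {}^{gg_{0}}f$ is continuous, being the orbit map of $f$ precomposed with right multiplication by $g_{0}$). Fix a right-invariant mean $\phi \colon \RUCB(G) \to \R$, which exists because $G$ is skew-amenable. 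No limiting process will be needed: a single scale $t$ suffices.

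The key geometric input is pre-syndeticity. Fix $t \in (0,1)$ and apply the definition to the identity neighborhood $U \defeq B_{d}(e,t)$: there is a finite set $E \subseteq G$ with $G = EUH = E B_{d}(e,t)H$. By Lemma~\ref{lemma:characteristic.function}(2) we have $B_{d}(e,t)H = \chi_{H,d}^{-1}((1-t,1])$, so for every $x \in G$ there is some $g \in E$ with $g^{-1}x \in \chi_{H,d}^{-1}((1-t,1])$, i.e. ${}^{g}\chi_{H,d}(x) = \chi_{H,d}(g^{-1}x) > 1-t$. As each ${}^{g}\chi_{H,d}$ is nonnegative, this yields the pointwise estimate
\[
	\sum\nolimits_{g \in E} {}^{g}\chi_{H,d} \, \geq \, (1-t)\cdot 1 .
\]

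Now I apply $\phi$. By linearity and monotonicity together with $\phi(1)=1$, we get $\sum_{g \in E}\phi({}^{g}\chi_{H,d}) \geq 1-t > 0$, so there is at least one $g_{0} \in E$ with $\phi({}^{g_{0}}\chi_{H,d}) > 0$. Define $\mu \colon \RUCB(G) \to \R$ by $\mu(f) \defeq \phi({}^{g_{0}}f)$. Then $\mu$ is linear and positive with $\mu(1)=\phi(1)=1$, hence a mean; it is right-invariant because left and right translations commute, ${}^{g_{0}}(f \circ \rho_{a}) = ({}^{g_{0}}f)\circ\rho_{a}$, so that $\mu(f \circ \rho_{a}) = \phi\big(({}^{g_{0}}f)\circ\rho_{a}\big) = \phi({}^{g_{0}}f) = \mu(f)$ by right-invariance of $\phi$. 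Finally $\mu(\chi_{H,d}) = \phi({}^{g_{0}}\chi_{H,d}) > 0$, as required.

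The only genuine subtlety—the step I would verify most carefully—is that replacing $\phi$ by its left translate $f \mapsto \phi({}^{g_{0}}f)$ preserves right-invariance; this rests on the elementary but crucial commutation ${}^{g}(f\circ\rho_{a}) = ({}^{g}f)\circ\rho_{a}$, together with the stability of $\RUCB(G)$ under left translation noted above. Note that Lemma~\ref{lemma:extension} plays no role here; it will presumably be needed afterwards, to transport such a mean from $G$ to a right-invariant mean on $\RUCB(H)$ and thereby prove Theorem~\ref{th:coco}.
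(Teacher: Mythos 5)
Your proof is correct and takes essentially the same route as the paper's: pre-syndeticity applied to a $d$-ball identity neighborhood covers $G$ by finitely many left translates of $\chi_{H,d}^{-1}((1-t,1]) = B_{d}(e,t)H$, the resulting pointwise lower bound on $\sum_{g \in E} {}^{g}\chi_{H,d}$ is hit with the right-invariant mean from skew-amenability, and the mean is then corrected by a single left translation $f \mapsto \phi({}^{g_{0}}f)$, whose right-invariance follows from the commutation of left and right translations exactly as in the paper. The only cosmetic differences are that the paper fixes $t = \tfrac{1}{2}$ and applies pre-syndeticity to $U = B_{d}(e,\tfrac{1}{2})H$ itself (then absorbs $HH = H$ to get $G = EU$), whereas you apply it directly to $B_{d}(e,t)$; your explicit check that $\RUCB(G)$ is stable under left translation is a point the paper leaves implicit.
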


\begin{proof} Let $d \in \Delta(G)$. Then \begin{displaymath}
	U \, \defeq \, \chi_{H,d}^{-1}\! \left(\left( \tfrac{1}{2},1\right]\right) \, \stackrel{\ref{lemma:characteristic.function}(2)}{=} \, B_{d}\!\left(e,\tfrac{1}{2}\right)\! H
\end{displaymath} constitutes a neighborhood of the neutral element in $G$. Since $H$ is pre-syndetic in $G$, there exists a finite subset $E \subseteq G$ such that $G=EUH$, thus \begin{displaymath}
	G \, = \, EB_{d}\!\left(e,\tfrac{1}{2}\right)\! HH \, = \, EB_{d}\!\left(e,\tfrac{1}{2}\right)\! H \, = \, EU .
\end{displaymath} It follows that \begin{equation}\tag{$\ast$}\label{covering}
	\sum\nolimits_{g \in E} {\chi_{H,d}} \circ {\lambda_{g^{-1}}} \, \geq \, \tfrac{1}{2} .
\end{equation} Thanks to $G$ being skew-amenable, there exists a $G$-right-invariant mean $\nu \colon \RUCB(G) \to \R$. Since \begin{displaymath}
	\sum\nolimits_{g \in E} \nu\!\left({\chi_{H,d}} \circ {\lambda_{g^{-1}}}\right) \, = \, \nu \! \left( \sum\nolimits_{g \in E} {\chi_{H,d}} \circ {\lambda_{g^{-1}}} \right) \, \stackrel{\eqref{covering}}{\geq} \, \tfrac{1}{2} ,
\end{displaymath} there exists $g_{0} \in E^{-1}$ such that $\nu( {\chi_{H,d}} \circ {\lambda_{g_{0}}}) > 0$. Considering the mean \begin{displaymath}
	\mu \colon \, \RUCB(G) \, \longrightarrow \, \R, \quad f \, \longmapsto \, \nu( f \circ \lambda_{g_{0}}) ,
\end{displaymath} we conclude that $\mu (\chi_{H,d}) > 0$ and \begin{displaymath}
	\mu(f\circ {\rho_{g}}) \, = \, \nu( f \circ \rho_{g} \circ \lambda_{g_{0}}) \, = \, \nu( f \circ \lambda_{g_{0}} \circ \rho_{g}) \, = \, \nu( f \circ \lambda_{g_{0}}) \, = \, \mu(f)
\end{displaymath} for all $g \in G$ and $f \in \RUCB(G)$. \end{proof}

Here is our Theorem \ref{th:coco} from the Introduction.

\begin{theorem} A pre-syndetic topological subgroup of a skew-amenable topological group is skew-amenable. \end{theorem}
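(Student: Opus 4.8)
The plan is to manufacture a right-invariant mean on $\RUCB(H)$ by transporting a $G$-right-invariant mean on $\RUCB(G)$ through the McShane-type extension $f \mapsto f^{d}$ of Lemma~\ref{lemma:extension}, after localizing it near $H$ by means of the weights $\chi_{H,t^{-1}d}$. Concretely, fix $d \in \Delta(G)$ and $t \in (0,1]$. Since $t^{-1}d \in \Delta(G)$, Lemma~\ref{lemma:pre-syndetic} furnishes a $G$-right-invariant mean $\mu = \mu_{d,t}$ on $\RUCB(G)$ with $c_{d,t} \defeq \mu(\chi_{H,t^{-1}d}) > 0$. For $f \in \Lip_{1}(H,d\vert_{H\times H})$ I set
\[
	\Phi_{d,t}(f) \,\defeq\, \frac{\mu\!\left(f^{d}\,\chi_{H,t^{-1}d}\right)}{c_{d,t}} ,
\]
which makes sense because $\RUCB(G)$ is an algebra and $f^{d} \in \Lip_{1}(G,d)$, $\chi_{H,t^{-1}d} \in \Lip_{1}(G,t^{-1}d)$ are bounded, so $f^{d}\,\chi_{H,t^{-1}d} \in \RUCB(G)$. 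By Remark~\ref{remark:pseudometrics}(3), every $f \in \RUCB(H)$ lies in $\Lip_{1}(H,d_{0}\vert_{H\times H})$ for some $d_{0}$, and hence in $\Lip_{1}(H,d\vert_{H\times H})$ for all $d \geq d_{0}$, so $\Phi_{d,t}(f)$ is defined for all sufficiently large $d$.

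Next I would record the properties of $\Phi_{d,t}$. Positivity of $\mu$ and the fact that $f \geq 0$ forces $f^{d}(x)=\inf_{h}f(h)+d(x,h)\geq 0$ give $\Phi_{d,t}(f)\geq 0$ whenever $f \geq 0$; and since $1^{d}(x)=1+\inf_{h}d(x,h)$ satisfies $\chi_{H,t^{-1}d} \leq 1^{d}\chi_{H,t^{-1}d} \leq (1+t)\chi_{H,t^{-1}d}$, one gets $1 \leq \Phi_{d,t}(1) \leq 1+t$. Lemma~\ref{lemma:extension}(2) yields the norm bound $\lvert \Phi_{d,t}(f)\rvert \leq \Vert f\Vert_{\infty}+t$. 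For $f,g \in \Lip_{1}(H,d_{0}\vert_{H\times H})$ and $d \geq 2d_{0}$, linearity of $\mu$ together with Lemma~\ref{lemma:extension}(3) gives $\lvert \Phi_{d,t}(f+g)-\Phi_{d,t}(f)-\Phi_{d,t}(g)\rvert \leq 3t$, and Lemma~\ref{lemma:extension}(4) gives $\lvert \Phi_{d,t}(rf)-r\Phi_{d,t}(f)\rvert \leq (\lvert r\rvert+1)t$ once $\lvert r\rvert d_{0} \leq d$. The decisive point is that invariance is \emph{exact}: for $h \in H$, Lemma~\ref{lemma:extension}(5) gives $(f\circ\rho_{h})^{d}=f^{d}\circ\rho_{h}$ and Lemma~\ref{lemma:characteristic.function}(1) gives $\chi_{H,t^{-1}d}\circ\rho_{h}=\chi_{H,t^{-1}d}$, so $(f\circ\rho_{h})^{d}\chi_{H,t^{-1}d}=(f^{d}\chi_{H,t^{-1}d})\circ\rho_{h}$; applying the $G$-right-invariant $\mu$ yields $\Phi_{d,t}(f\circ\rho_{h})=\Phi_{d,t}(f)$.

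Finally I would pass to the limit. Direct $D \defeq \Delta(G)\times(0,1]$ by $(d_{1},t_{1})\preceq(d_{2},t_{2})$ iff $d_{1}\leq d_{2}$ and $t_{1}\geq t_{2}$ (a directed set, since $\Delta(G)$ is closed under pointwise maxima), and choose an ultrafilter $\mathcal{U}$ on $D$ containing every tail. For $f\in\RUCB(H)$ put $\phi(f)\defeq\lim_{\mathcal{U}}\Phi_{d,t}(f)$, the ultralimit of the eventually-defined bounded net. Because $\mathcal{U}$ contains, for each fixed $d_{0}$ and each $\delta>0$, the tail on which $d\geq 2d_{0}$ and $t\leq\delta$, the approximate additivity and homogeneity bounds force $\phi(f+g)=\phi(f)+\phi(g)$ and $\phi(rf)=r\phi(f)$ upon letting $\delta\to 0$; positivity passes to the limit, $\phi(1)=1$ (as $t\to 0$), and the exact invariance gives $\phi(f\circ\rho_{h})=\phi(f)$ for all $h\in H$. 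Thus $\phi$ is a positive, unital, hence norm-one, right-invariant mean on $\RUCB(H)$, so $H$ is skew-amenable.

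The only genuine difficulty is that $f\mapsto f^{d}$ is nonlinear, so the functionals $\Phi_{d,t}$ are merely approximately linear; the whole construction is engineered so that the nonlinearity error is controlled by $t$ and disappears as $t\to 0$ (Lemma~\ref{lemma:extension}(3),(4)), while enlarging $d$ relative to the fixed $d_{0}$ governing the finitely many test functions keeps every extension within the scope of those estimates, and invariance (Lemma~\ref{lemma:extension}(5)) survives exactly. The supporting bookkeeping — checking that each $f\in\RUCB(H)$ eventually enters the domain of $\Phi_{d,t}$ and stays there under the relevant operations (sums, scalar multiples, and right translations by $H$, all of which preserve membership in $\Lip_{1}(H,d_{0}\vert_{H\times H})$ by right-invariance of $d_{0}$) — is what makes the ultralimit legitimate.
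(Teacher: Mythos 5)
Your proposal is correct and follows essentially the same route as the paper's own proof: the same localizing weights $\chi_{H,t^{-1}d}$ supplied by Lemma~\ref{lemma:pre-syndetic}, the same extension $f \mapsto f^{d}$ with the error bounds of Lemma~\ref{lemma:extension}(2)--(4) controlling the nonlinearity, exact invariance via Lemma~\ref{lemma:extension}(5) and Lemma~\ref{lemma:characteristic.function}(1), and an ultralimit over the directed set $\Delta(G) \times (0,1]$. Your bookkeeping (eventual membership of each $f \in \RUCB(H)$ in $\Lip_{1}(H,d\vert_{H\times H})$ via Remark~\ref{remark:pseudometrics}(3), and stability under sums, scalars, and right translations) matches the paper's argument in all essentials.
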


\begin{proof} Let $H$ be a pre-syndetic topological subgroup of a skew-amenable topological group $G$. We equip the set $D \defeq \Delta(G) \times (0,1]$ with the directed partial order \begin{displaymath}
	(d,t) \preceq (d',t') \quad :\Longleftrightarrow \quad (d \leq d') \, \wedge \, (t \geq t') \qquad ((d,t),(d',t') \in D) .
\end{displaymath} Let us choose an ultrafilter $\mathscr{F}$ on $D$ such that \begin{equation}\tag{1}\label{ultrafilter}
	\{ \{ (d,t) \in D \mid (d_{0},t_{0}) \preceq (d,t) \} \mid (d_{0},t_{0}) \in D \} \, \subseteq \, \mathscr{F} .
\end{equation} By Lemma~\ref{lemma:pre-syndetic} and the axiom of choice, there exists a family $(\mu_{d,t})_{(d,t) \in D}$ of $G$-right-invariant means on $\RUCB(G)$ such that \begin{equation}\tag{2}\label{positive}
	\forall (d,t) \in D \colon \qquad \mu_{d,t}(\chi_{H,t^{-1}d}) \, > \, 0 .
\end{equation} Thanks to Remark~\ref{remark:pseudometrics}(3), as well as~\eqref{positive} and~\eqref{ultrafilter}, the map \begin{displaymath}
	\mu \colon \, \RUCB(H) \, \longrightarrow \, \R, \quad f \, \longmapsto \, \lim_{(d,t) \to \mathscr{F}} \frac{\mu_{d,t}\left(f^{d}\chi_{H,t^{-1}d}\right)}{\mu_{d,t}\left(\chi_{H,t^{-1}d}\right)} \, \stackrel{\ref{lemma:extension}(2)}{\in} \, [-\Vert f \Vert_{\infty},\Vert f \Vert_{\infty}]
\end{displaymath} is well defined. Since $(\mu_{d,t})_{(d,t) \in D}$ is a family of means, $\chi_{H,t^{-1}d} \geq 0$ for all $(d,t) \in D$, and \begin{displaymath}
	\forall d \in \Delta(G) \ \forall f \in \Lip_{1}(H,d) \colon \qquad f \geq 0 \ \Longrightarrow \ f^{d} \geq 0 ,
\end{displaymath} we see that $\mu$ is positive. Also, as observed above, $\mu(1) \leq 1$ by Lemma~\ref{lemma:extension}(2). On the other hand, $1 \leq 1^{d}$ for every $d \in \Delta(G)$, thus $1 \leq \mu(1)$. So, $\mu(1) = 1$. To prove linearity of $\mu$, let $f,g \in \RUCB(H)$ and $r \in \R$. By Remark~\ref{remark:pseudometrics}(3) and $(\Delta(G),{\leq})$ being directed, we find $d_{0} \in \Delta(G)$ such that $f,g \in \Lip_{1}(H,d_{0}\vert_{H \times H})$. Now, if $t_{0} \in (0,1]$, then \begin{displaymath}
	D_{0} \, \defeq \, \left\{ (d,t) \in D \left\vert \, {\left((2 \vee \lvert r \rvert)d_{0},\tfrac{1}{3(\vert r \vert + 1)}t_{0}\right) \preceq (d,t)} \right\} \! \right. \, \stackrel{\eqref{ultrafilter}}{\in} \, \mathscr{F} ,
\end{displaymath} while \begin{align*}
	\left\lvert \frac{\mu_{d,t}\left((f+g)^{d}\chi_{H,t^{-1}d}\right)}{\mu_{d,t}\left(\chi_{H,t^{-1}d}\right)}-\left(\frac{\mu_{d,t}\left(f^{d}\chi_{H,t^{-1}d}\right)}{\mu_{d,t}\left(\chi_{H,t^{-1}d}\right)}+\tfrac{\mu_{d,t}\left(f^{d}\chi_{H,t^{-1}d}\right)}{\mu_{d,t}\left(\chi_{H,t^{-1}d}\right)}\right) \right\rvert \, &\leq \, \frac{\mu_{d,t}\left(\left\lvert (f+g)^{d} - \left(f^{d} + g^{d}\right) \right\rvert \chi_{H,t^{-1}d}\right)}{\mu_{d,t}\left(\chi_{H,t^{-1}d}\right)} \\
	& \stackrel{\ref{lemma:extension}(3)}{\leq} \, 3t \, \leq \, t_{0} , \\
	\left\lvert \frac{\mu_{d,t}\left((rf)^{d}\chi_{H,t^{-1}d}\right)}{\mu_{d,t}(\chi_{H,t^{-1}d})}-r\frac{\mu_{d,t}\left(f^{d}\chi_{H,t^{-1}d}\right)}{\mu_{d,t}(\chi_{H,t^{-1}d})} \right\rvert \, &\leq \, \frac{\mu_{d,t}\left(\left\lvert (rf)^{d} - r f^{d} \right\rvert \chi_{H,t^{-1}d}\right)}{\mu_{d,t}(\chi_{H,t^{-1}d})} \\
	& \stackrel{\ref{lemma:extension}(4)}{\leq} \, (\vert r \vert + 1)t \, \leq \, t_{0}
\end{align*} for every $(d,t) \in D_{0}$, which entails that \begin{displaymath}
	\vert \mu(f+g) - (\mu(f)+\mu(g)) \vert \, \leq \, t_{0}, \qquad \vert \mu(rf) - r\mu(f) \vert \, \leq \, t_{0} .
\end{displaymath} This shows that $\mu(f+g) = \mu(f) + \mu(g)$ and $\mu(rf) = r\mu(f)$, as desired. Hence, $\mu$ constitutes a mean on $\RUCB(H)$. Finally, since each of the means $(\mu_{d,t})_{(d,t) \in D}$ is $G$-right-invariant, \begin{align*}
	\mu(f \circ \rho_{h}) \, &= \, \lim\nolimits_{(d,t) \to \mathscr{F}} \frac{\mu_{d,t}\left((f \circ \rho_{h})^{d}\chi_{H,t^{-1}d}\right)}{\mu_{d,t}(\chi_{H,t^{-1}d})} \, \stackrel{\ref{lemma:extension}(5)}{=} \, \lim\nolimits_{(d,t) \to \mathscr{F}} \frac{\mu_{d,t}\left(\left({f^{d}} \circ {\rho_{h}}\right)\chi_{H,t^{-1}d}\right)}{\mu_{d,t}(\chi_{H,t^{-1}d})} \\
	&\stackrel{\ref{lemma:characteristic.function}(1)}{=} \, \lim\nolimits_{(d,t) \to \mathscr{F}} \frac{\mu_{d,t}\left(\left({f^{d}} \chi_{H,t^{-1}d}\right) \circ {\rho_{h}} \right)}{\mu_{d,t}(\chi_{H,t^{-1}d})} \, = \, \lim\nolimits_{(d,t) \to \mathscr{F}} \frac{\mu_{d,t}\left(\left({f^{d}} \chi_{H,t^{-1}d}\right) \circ {\rho_{h}} \right)}{\mu_{d,t}(\chi_{H,t^{-1}d})} \, = \, \mu(f)
\end{align*} for all $h \in H$ and $f \in \RUCB(H)$, that is, $\mu$ is $H$-right-invariant. \end{proof}

\section{Amenability in the sense of Malliavin--Malliavin}
\label{s:MM}

We propose to call a topological group $G$ \emph{amenable in the sense of Malliavin and Malliavin} (or simply \emph{M--M amenable}) if there is a mean on the space of all bounded Borel functions on $G$ that is invariant under the action on the left by some dense subgroup, $H$, of $G$. In \cite{MM} it was shown that the groups $C(\I,K)$ of continuous paths and $C(\s^1,K)$ of continuous loops with values in a compact Lie group $K$ enjoy this property, where the dense subgroups consist of $C^1$ paths and loops, respectively.

The restriction of a mean $\phi$ as above on the space $\RUCB(G)$ is invariant under the left action of $H$, and since for every $f\in\RUCB(G)$ the orbit map $G\ni g\mapsto {\,}^gf\in\RUCB(G)$ is norm-continuous, the left invariance extends over all left translations by elements of $G$: if $h_n\to g$, $h_n\in H$, then
\[\phi\!\left({}^g f\right) \, = \, \phi\!\left(\lim \, {}^{h_n}\!f\right) \, = \, \lim\phi\!\left({}^{h_n}\!f\right) \, = \, \phi(f).\]
Thus, every M-M amenable topological group is amenable. Shortly we will see the converse in general is false.

If a topological group $G$ contains a dense subgroup that is M--M amenable, then it is clearly M--M amenable itself. In particular, if a topological group admitting an approximating chain of compact subgroups is amenable in the above sense: the dense subgroup being the union of the compact subgroups, and the mean on bounded Borel functions is just the ultralimit of the integrals with regard to Haar measures on those compact groups. In particular, the unitary group $U(\ell^2)$ with the strong operator topology and the infinite symmetric group $S_{\infty}$ with the topology of pointwise convergence on $\omega$ with the discrete topology are both M--M amenable.

Also, a topological group is M--M amenable if it contains a dense subgroup amenable as discrete. In this connection, it would be interesting to know if the group ${\mathrm{Aut}}(\mathbb Q,<)$ (which is extremely amenable) is M--M amenable. This would be the case if Thompson's group $F$ were amenable, for example!

Another candidate for a counter-example would be the last example in \cite{ST} (constructed after Def. 6.4). It is the completion of the free non-abelian group on finitely many generators equipped with the inverse limit topology of free nilpotent groups on the same generators of growing class of nilpotency. The group is amenable \cite{ST}, and for the moment we do not know if it is M--M amenable.

The group in this example is Polish as well. 
We do not know if there exists an amenable Polish group that is not M--M amenable. In the absence of completeness, however, an example is easy to construct, as a dense subgroup of the above example from \cite{ST}.

\begin{example}
Denote $FN^{(k)}_2$ the free nilpotent group of class $k$ on $2$ generators.
Let $G$ be the free group $F_2$ on two generators equipped with the weakest topology making every canonical homomorphism $F_2\to FN^{(k)}_2$ continuous when the latter group is equipped with discrete topology. The group $G$ is amenable (for the same reasons as explained in the last example in \cite{ST}). At the same time, it is not M--M amenable.

Since $G$ is countable, every function $f\in\ell^{\infty}(G)$ is Borel measurable. Suppose there is a mean $\phi$ on $\ell^{\infty}(G)$ invariant under the left multiplication by elements of a dense subgroup $H<G$.  Any section $s\colon H\backslash G\to G$ is Borel measurable because the spaces involved are countable. In a standard way, the mean $\phi$ gives rise to a left invariant mean $\psi$ on $\ell^{\infty}(H)$ defined by
\[\psi(f) \, \defeq \, \phi\!\left[g\mapsto f\!\left(gs(Hg)^{-1}\right)\right] \qquad (f\in \ell^{\infty}(H)) .\]
Let us work out the details of the argument. Given a bounded function $f\colon H\to\R$ and an element $h\in H$, denote, as before, for every $x\in H$
\[{\,}^hf(x) \, = \, f\!\left(h^{-1}x\right).\]
Note that for each $g\in G$, we have $g s(Hg)^{-1}\in H$.
Associate to $f$ a function $\bar f\colon G\to \R$ by defining
\[\bar f(g) \, \defeq \, f\!\left(gs(Hg)^{-1}\right) \qquad (g \in G).\]
For every $h\in H$ we have now
\begin{displaymath}
{\,}^{h}\bar f(g) \, = \, {\,}^hf\!\left(gs(Hg)^{-1}\right) \, = \, f\!\left(h^{-1}g s\! \left(Hh^{-1}g\right)^{-1}\right) \, = \, f\!\left(h^{-1}g s(Hg)^{-1}\right) \, = \, \overline{{\,}^hf}(g).
\end{displaymath}
Consequently, 
\begin{displaymath}
\psi\! \left({}^h\!f\right) \, = \, \phi\!\left(\overline{{}^h \! f}\right) \, = \, \phi\!\left( {}^{h}\! \bar f\right) \, = \, \phi\!\left(\bar f\right) \, = \, \psi(f),
\end{displaymath}
using the fact that $\phi$ is left-invariant. Thus, $H$ is amenable with the discrete topology.
Since $H$ is free non-abelian, we get a contradiction.
\end{example}

\section{Reiter- and F\o lner-type characterizations of Malliavin--Malliavin amenability}
\label{s:reiter}

To clarify some notation, let $X$ be a set. We denote by $\mathscr{F}(X)$ the set of all finite subsets of $X$. The \emph{indicator function} associated with a subset $A \subseteq X$ is defined as \begin{displaymath}
	\chi_{A} \colon \, X \, \longrightarrow \, \{ 0,1 \}, \quad x \, \longmapsto \, \begin{cases}
			\, 1 & \text{if } x \in X, \\
			\, 0 & \text{otherwise.}
		\end{cases}
\end{displaymath} The \emph{support} of a function $f \in \mathbb{R}^{X}$ is defined as $\spt (f) \defeq f^{-1}(\mathbb{R}\setminus \{ 0 \})$. We consider the real vector space \begin{displaymath}
	\left. \mathbb{R}X \, \defeq \, \left\{ f \in \mathbb{R}^{X} \, \right| \spt (f) \text{ finite} \right\} ,
\end{displaymath} and for $x \in X$, we put \begin{displaymath}
\delta_{x}(y) \, \defeq \, \begin{cases}
	\, 1 & \text{if } x=y , \\
	\, 0 & \text{otherwise}
\end{cases} \qquad (y \in X) .
\end{displaymath} Note that every element $a \in \R X$ gives rise to a linear form on $\R^{X}$ defined by \begin{displaymath}
	\R^{X} \! \, \longrightarrow \, \R, \quad f \, \longmapsto \, a(f) \defeq \sum\nolimits_{x \in X} a(x)f(x) .
\end{displaymath} Furthermore, we recall that there is a norm on $\R X$ defined by \begin{displaymath}
	\Vert f \Vert_{1} \, \defeq \, \sum\nolimits_{x \in X} \vert f(x) \vert \qquad (f \in \R X) .
\end{displaymath} Of course, if $G$ is a group acting on $X$, then $G$ acts on by linear transformations on $\mathbb{R}X$ given by \begin{displaymath}
	ga(x) \, \defeq \, a\!\left( g^{-1}x \right) \qquad (g \in G, \, a \in \mathbb{R}X, \, x \in X) .
\end{displaymath}

Now let $X$ be a measurable space. We are going to endow $\mathbb{R}X$ with a suitable locally convex topology. For every countable partition $\mathscr{P}$ of $X$ into measurable subsets of $X$, the natural projection $\pi_{\mathscr{P}} \colon X \to \mathscr{P}$ extends to a unique linear operator $\bar{\pi}_{\mathscr{P}} \colon \mathbb{R}X \to \mathbb{R}\mathscr{P}$ and hence a semi-norm on $\mathbb{R}X$ defined via \begin{displaymath}
	\Vert a \Vert_{\mathscr{P}} \, \defeq \, \Vert \bar{\pi}_{\mathscr{P}}(a) \Vert_{1} \, = \, \sum\nolimits_{P \in \mathscr{P}} \left\lvert \sum\nolimits_{x \in P} a(x) \right\rvert \qquad (a \in \mathbb{R}X) .
\end{displaymath} This family of semi-norms turns $\mathbb{R}X$ into a locally convex topological vector space. Since the set of countable measurable partitions of $X$ is upwards directed with respect to refinement and we have $\Vert \cdot \Vert_{\mathscr{P}} \leq \Vert \cdot \Vert_{\mathscr{P}'}$ for any two countable measurable partitions $\mathscr{P}$ and $\mathscr{P}'$ of $X$ with $\mathscr{P}'$ refining $\mathscr{P}$, it follows that \begin{displaymath}
	\{ B_{\Vert \cdot \Vert_{\mathscr{P}}}(0,\epsilon) \mid \mathscr{P} \text{ countable measurable partition of } X, \, \epsilon > 0 \}
\end{displaymath} is a neighborhood base for $0$ in $\mathbb{R}X$. Evidently, if a group $G$ acts by automorphisms on the measurable space $X$, then $\mathbb{R}X \to \mathbb{R}X, \, a \mapsto ga$ is continuous for each $g \in G$. Moreover, let us consider the real vector space \begin{displaymath}
	\B(X) \, \defeq \, \{ f \in \ell^{\infty}(X) \mid f \text{ measurable} \} .
\end{displaymath} Our first objective is to point out the following observation.

\begin{lemma}\label{lemma:duality} If $X$ is a measurable space, then $\Phi \colon (\mathbb{R}X)' \to \B(X)$ given by \begin{displaymath}
	\Phi (F)(x) \, \defeq \, F(\delta_{x}) \qquad (F \in (\mathbb{R}X)', \, x \in X)
\end{displaymath} is a well-defined positive linear operator. \end{lemma}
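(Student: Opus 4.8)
The plan is to verify three things: that $\Phi(F)$ really lands in $\B(X)$ for each $F \in (\mathbb{R}X)'$ (well-definedness), that $\Phi$ is linear, and that $\Phi$ is positive. Linearity is immediate from the pointwise definition, since $\Phi(F+G)(x) = (F+G)(\delta_{x}) = F(\delta_{x})+G(\delta_{x})$ and $\Phi(rF)(x) = rF(\delta_{x})$ for all $x \in X$. Positivity is equally direct: if $F$ is a positive functional and $x \in X$, then $\delta_{x} \geq 0$ in $\mathbb{R}X$, whence $\Phi(F)(x) = F(\delta_{x}) \geq 0$. Thus the entire content of the lemma is the claim that $\Phi(F) \in \B(X)$, i.e. that $x \mapsto F(\delta_{x})$ is bounded and measurable.

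For this I would first exploit the continuity of $F$. Because the defining family of semi-norms $\{ \Vert \cdot \Vert_{\mathscr{P}} \}$ is upward directed (as recorded above in the construction of the topology on $\mathbb{R}X$), continuity of the linear functional $F$ yields a single countable measurable partition $\mathscr{P}$ of $X$ together with a constant $C \in \R_{>0}$ such that $\lvert F(a) \rvert \leq C \Vert a \Vert_{\mathscr{P}}$ for all $a \in \mathbb{R}X$. The key is then to compute $\Vert \cdot \Vert_{\mathscr{P}}$ on the two kinds of element that occur. First, for any $x \in X$ one has $\Vert \delta_{x} \Vert_{\mathscr{P}} = 1$, since $x$ lies in exactly one part of $\mathscr{P}$; hence $\lvert \Phi(F)(x) \rvert \leq C$, so $\Phi(F)$ is bounded by $C$.

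The measurability is the point requiring a small idea. I would show that $\Phi(F)$ is constant on each part of $\mathscr{P}$. Indeed, if $x$ and $x'$ belong to the same part $P \in \mathscr{P}$, then in the difference $\delta_{x} - \delta_{x'}$ the two unit masses cancel within $P$ and both vanish on every other part, so $\Vert \delta_{x} - \delta_{x'} \Vert_{\mathscr{P}} = 0$; the continuity estimate then forces $\lvert F(\delta_{x}) - F(\delta_{x'}) \rvert \leq C \Vert \delta_{x} - \delta_{x'} \Vert_{\mathscr{P}} = 0$, that is, $\Phi(F)(x) = \Phi(F)(x')$. Writing $c_{P}$ for the common value of $\Phi(F)$ on $P$, we get that for every Borel set $B \subseteq \R$ the preimage $\Phi(F)^{-1}(B) = \bigcup \{ P \in \mathscr{P} \mid c_{P} \in B \}$ is a countable union of measurable parts, hence measurable. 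Thus $\Phi(F)$ is a bounded measurable function, i.e. an element of $\B(X)$, which completes the well-definedness.

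The only step with any substance is this measurability argument, and its crux is the elementary but essential observation that $\Vert \delta_{x} - \delta_{x'} \Vert_{\mathscr{P}} = 0$ whenever $x$ and $x'$ share a part of $\mathscr{P}$; everything else is bookkeeping. I anticipate no genuine obstacle beyond ensuring that the passage from ``continuous functional'' to ``a single dominating semi-norm $C \Vert \cdot \Vert_{\mathscr{P}}$'' is properly justified, which is exactly where the directedness of the semi-norm family is used.
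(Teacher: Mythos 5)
Your proof is correct and takes essentially the same route as the paper's: both use continuity of $F$ to obtain a single countable measurable partition $\mathscr{P}$ controlling $F$, deduce boundedness from $\Vert \delta_{x} \Vert_{\mathscr{P}} = 1$, and obtain measurability from the key observation that $\Vert \delta_{x} - \delta_{x'} \Vert_{\mathscr{P}} = 0$ whenever $x$ and $x'$ lie in the same part, so that $\Phi(F)$ is constant on the parts of a countable measurable partition. The only cosmetic difference is that you package continuity as a single dominating estimate $\lvert F \rvert \leq C \Vert \cdot \Vert_{\mathscr{P}}$, whereas the paper invokes continuity twice (bounded image of a ball for boundedness, and the null space of the semi-norm for measurability) --- this changes nothing of substance.
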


\begin{proof} Let us check that $\Phi$ is well defined. To this end, consider a continuous linear functional $F \colon \mathbb{R}X \to \mathbb{R}$. Now, $f \defeq \Phi (F)$ is bounded for the following reason: since $F$ is continuous, there exists a countable measurable partition $\mathscr{P}$ of $X$ such that $F(B_{\Vert \cdot \Vert_{\mathscr{P}}}[0,1])$ is bounded, and as $\{ \delta_{x} \mid x \in X \} \subseteq B_{\Vert \cdot \Vert_{\mathscr{P}}}[0,1]$, it follows that \begin{displaymath}
	\sup \{ \vert f(x) \vert \mid x \in X \} \, = \, \sup \{ \vert F(\delta_{x}) \vert \mid x \in X \} \, < \, \infty .
\end{displaymath} We have to prove that $f$ is measurable. 
Thanks to continuity of $F$, there exist a countable measurable partition $\mathscr{P}$ of $X$ and $\delta > 0$ such that $F(B_{\Vert \cdot \Vert_{\mathscr{P}}}[0,\delta]) \subseteq \!\left[-1,1\right]$. The null-space $\mathcal{N}$ of the semi-norm $\Vert \cdot \Vert_{\mathscr{P}}$ (i.e., the set of all $x \in \R X$ with $\Vert x \Vert_{\mathscr{P}}=0$) is a linear subspace, and since its image under $F$ is contained in a bounded interval, that image is $\{ 0 \}$.
Now, if $P \in \mathscr{P}$ and $x,y \in P$, then $\delta_{x}-\delta_{y}\in {\mathcal N}$, and consequently
\begin{displaymath}
	f(x) - f(y) \, = \,  F(\delta_{x}) - F(\delta_{y}) \, = \,  F(\delta_{x} - \delta_{y}) \, =0.
\end{displaymath} 
It follows that $f$ is constant on the elements of some measurable countable partition, hence measurable.
\end{proof}

The above readily allows us to prove the following analogue of Reiter's amenability criterion for Malliavin--Malliavin amenability of topological groups in terms of countable Borel partitions.

\begin{proposition}\label{proposition:day} Let $G$ be a group acting by automorphisms on a measurable space~$X$. The following are equivalent. \begin{itemize}
	\item[$(1)$] There exists a $G$-invariant mean on $\B(X)$. 
	\item[$(2)$] For every countable measurable partition $\mathscr{P}$ of $X$, every finite $E \subseteq G$, and every $\epsilon > 0$, there exists $a \in \mathbb{R}X$ with $\Vert a \Vert_{1} = 1$ and $a \geq 0$ such that \begin{displaymath}
	\qquad \forall g \in E \colon \quad \Vert a - ga \Vert_{\mathscr{P}} \leq \epsilon .
\end{displaymath} 
\item[$(3)$] The same property as in $(2)$, but for finite measurable partitions $\mathscr{P}$.
\end{itemize} \end{proposition}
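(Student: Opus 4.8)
The plan is to prove the cycle $(1)\Rightarrow(2)\Rightarrow(3)\Rightarrow(1)$, which avoids arguing $(3)\Rightarrow(2)$ directly. The implication $(2)\Rightarrow(3)$ is immediate, a finite measurable partition being in particular a countable one. In both non-trivial directions the key mechanism is the pairing $a(f)=\sum_{x\in X}a(x)f(x)$ between $\mathbb{R}X$ and $\B(X)$: on one hand, for a measurable partition $\mathscr{P}$ and $\lambda\in\ell^{\infty}(\mathscr{P})$ the $\mathscr{P}$-measurable function $\tilde\lambda\defeq\lambda\circ\pi_{\mathscr{P}}\in\B(X)$ obeys $\lvert a(\tilde\lambda)\rvert\leq\lVert\lambda\rVert_{\infty}\lVert a\rVert_{\mathscr{P}}$; on the other hand, every $f\in\B(X)$ is a uniform limit of $\mathscr{P}$-measurable simple functions with $\mathscr{P}$ finite. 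I will also repeatedly use that on the positive cone $\lVert a\rVert_{1}=\lVert a\rVert_{\{X\}}$ is itself a partition seminorm.

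For $(1)\Rightarrow(2)$ I would run Day's convexity argument. Fix a $G$-invariant mean $m$ on $\B(X)$, a countable measurable partition $\mathscr{P}$, a finite set $E\subseteq G$ and $\epsilon>0$, and set $P(X)\defeq\{a\in\mathbb{R}X\mid a\geq0,\ \lVert a\rVert_{1}=1\}$ and $m_{a}(f)\defeq a(f)$. First, $m$ lies in the weak-$\ast$ closure of $\{m_{a}\mid a\in P(X)\}$: otherwise Hahn--Banach would separate $m$ from this convex set by some $f\in\B(X)$, forcing $\sup_{x}f(x)<m(f)$ and contradicting that $m$ is a mean. Choose a net $a_{i}\in P(X)$ with $m_{a_{i}}\to m$ weak-$\ast$ and consider the convex set $C\defeq\{(\bar\pi_{\mathscr{P}}(ga-a))_{g\in E}\mid a\in P(X)\}$ inside $\bigoplus_{g\in E}\mathbb{R}\mathscr{P}$, normed by the maximum of the $\lVert\cdot\rVert_{1}$. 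For every $\Lambda=(\lambda_{g})_{g\in E}$ in the dual $\bigoplus_{g}\ell^{\infty}(\mathscr{P})$, writing $\tilde\lambda_{g}\defeq\lambda_{g}\circ\pi_{\mathscr{P}}$ and using $a(g^{-1}\tilde\lambda_{g})=(ga)(\tilde\lambda_{g})$, one finds
\[\Lambda\!\left((\bar\pi_{\mathscr{P}}(ga_{i}-a_{i}))_{g}\right)=m_{a_{i}}\!\left(\sum_{g\in E}\left(g^{-1}\tilde\lambda_{g}-\tilde\lambda_{g}\right)\right)\longrightarrow m\!\left(\sum_{g\in E}\left(g^{-1}\tilde\lambda_{g}-\tilde\lambda_{g}\right)\right)=0,\]
the last equality holding because $m$ is invariant. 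Thus $0$ is in the weak closure of the convex set $C$, hence, by Mazur's theorem, in its norm closure; a point of $C$ within $\epsilon$ of $0$ is precisely an $a\in P(X)$ with $\lVert ga-a\rVert_{\mathscr{P}}\leq\epsilon$ for all $g\in E$, which is $(2)$.

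For $(3)\Rightarrow(1)$ I would construct the mean by compactness. Direct the set $D$ of triples $(\mathscr{P},E,\epsilon)$, with $\mathscr{P}$ a finite measurable partition, $E\subseteq G$ finite and $\epsilon>0$, by refinement of $\mathscr{P}$, inclusion of $E$ and decrease of $\epsilon$; by $(3)$ choose $a_{\mathscr{P},E,\epsilon}\in P(X)$ with $\lVert ga-a\rVert_{\mathscr{P}}\leq\epsilon$ for all $g\in E$. Let $m$ be the ultralimit of the means $m_{a_{\mathscr{P},E,\epsilon}}$ along an ultrafilter on $D$ refining the tail filter; as a weak-$\ast$ limit of means, $m$ is again a mean. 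For $G$-invariance, fix $g_{0}\in G$ and $f\in\B(X)$; for $\delta>0$ pick a finite measurable partition $\mathscr{P}_{\delta}$ and a $\mathscr{P}_{\delta}$-measurable simple function $f_{\delta}$ with $\lVert f-f_{\delta}\rVert_{\infty}\leq\delta$. Whenever $(\mathscr{P},E,\epsilon)$ has $\mathscr{P}$ refining $\mathscr{P}_{\delta}$ and $g_{0}^{-1}\in E$, the relation $a(g_{0}f)=(g_{0}^{-1}a)(f)$ gives
\[\left\lvert m_{a}(g_{0}f-f)\right\rvert=\left\lvert (g_{0}^{-1}a-a)(f)\right\rvert\leq\lVert f_{\delta}\rVert_{\infty}\lVert g_{0}^{-1}a-a\rVert_{\mathscr{P}_{\delta}}+\delta\lVert g_{0}^{-1}a-a\rVert_{1}\leq(\lVert f\rVert_{\infty}+\delta)\epsilon+2\delta.\]
Since $\epsilon\to0$ along the ultrafilter, the ultralimit yields $\lvert m(g_{0}f-f)\rvert\leq2\delta$ for every $\delta>0$, whence $m(g_{0}f-f)=0$ and $m$ is $G$-invariant.

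The step I expect to be most delicate is reconciling the two topologies on $\mathbb{R}X$: the hypotheses $(2)$ and $(3)$ control only the partition seminorms $\lVert\cdot\rVert_{\mathscr{P}}$, whereas a mean on $\B(X)$ probes arbitrary bounded measurable functions through the sup-norm pairing. The bridge is uniform approximation of $f\in\B(X)$ by $\mathscr{P}$-measurable simple functions, whose error is absorbed using that $\lVert\cdot\rVert_{1}$ is the trivial-partition seminorm on the positive cone; keeping this error term under control across the net is the crux of $(3)\Rightarrow(1)$. In $(1)\Rightarrow(2)$ the subtle point is rather the passage from the weak to the norm closure via Mazur, together with the identification of the separating functionals with the coboundaries $\sum_{g}(g^{-1}\tilde\lambda_{g}-\tilde\lambda_{g})$, which are annihilated precisely by an invariant mean --- this is where invariance of $m$ is used.
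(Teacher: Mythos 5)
Your proposal is correct, and in the direction $(1)\Rightarrow(2)$ it takes a genuinely different route from the paper. The trivial implication $(2)\Rightarrow(3)$ and the ultrafilter construction for $(3)\Rightarrow(1)$ match the paper's proof in substance: the paper verifies invariance of the ultralimit mean first on indicator functions $\chi_{B}$, via the estimate $\vert a(\chi_{B})\vert \leq \Vert a \Vert_{\{B,\,X\setminus B\}}$, and then extends to all of $\B(X)$ by sup-norm density of simple functions and continuity of the mean, whereas you fold that same approximation into a single quantitative estimate with a finite partition $\mathscr{P}_{\delta}$ refined along the net --- a cosmetic difference (your bookkeeping, requiring $g_{0}^{-1}\in E$, $\mathscr{P}$ refining $\mathscr{P}_{\delta}$, and using monotonicity of the partition seminorms under refinement and $\Vert g_{0}^{-1}a-a\Vert_{1}\leq 2$, is sound). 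The real divergence is in $(1)\Rightarrow(2)$. The paper argues by contraposition: if the Reiter condition fails for some $(\mathscr{P},E,\epsilon)$, then $0$ is separated from the convex set $\{(a-ga)_{g\in E}\mid a\in T\}$ inside $(\mathbb{R}X)^{E}$ carrying the full locally convex topology generated by \emph{all} countable partition seminorms, and the separating functional is converted into functions $f_{g}\in\B(X)$ with $\sum_{g\in E}(f_{g}-f_{g}\circ\tau_{g})\geq 1$ via Lemma~\ref{lemma:duality}; that lemma --- every continuous functional on $\mathbb{R}X$ is represented by a bounded \emph{measurable} function, proved through the null space of a partition seminorm --- is the technical heart of that direction. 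You instead run the classical Day scheme forward: approximate the invariant mean weak-$\ast$ by finitely supported means $m_{a}$ (one Hahn--Banach separation, valid on $\B(X)$ since it contains the constants and the evaluations $\delta_{x}$), push the approximating net through the coboundary pairing to get weak convergence of $(\bar{\pi}_{\mathscr{P}}(ga_{i}-a_{i}))_{g\in E}$ to $0$ in $\bigoplus_{g\in E}\mathbb{R}\mathscr{P}$, and upgrade weak to norm approximation by Mazur. This works with a single fixed partition $\mathscr{P}$ and needs only the elementary duality $(\mathbb{R}\mathscr{P},\Vert\cdot\Vert_{1})'\cong\ell^{\infty}(\mathscr{P})$ together with the trivial observation that $\lambda\circ\pi_{\mathscr{P}}$ is measurable, so Lemma~\ref{lemma:duality} becomes unnecessary for the proposition. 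The trade-off: your route uses two Hahn--Banach applications (weak-$\ast$ density plus Mazur) in place of the paper's single separation, while the paper's route isolates the identification of $(\mathbb{R}X)'$ with $\B(X)$ as a clean statement of independent interest and obtains the contrapositive directly.
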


\begin{proof} Let us agree on the following piece of notation: for each $g \in G$, we define $\tau_{g} \colon X \to X, \, x \mapsto gx$.
	
(2)$\Longrightarrow$(3). Trivial.
	
(3)$\Longrightarrow$(1). This follows by a standard ultrafilter argument, which we carry out for the sake of convenience. To this end, let equip the set \begin{displaymath}
	D \, \defeq \, \{ \mathscr{P} \mid \mathscr{P} \text{ finite measurable partition of } X \} \times \mathscr{F}(G) \times (0,1]
\end{displaymath} with the directed partial order given by \begin{displaymath}
	(\mathscr{P},E,\epsilon) \preceq (\mathscr{P}',E',\epsilon') \ \, :\Longleftrightarrow \ \, (\mathscr{P}'\text{ refines } \mathscr{P}) \wedge (E \subseteq E') \wedge (\epsilon \geq \epsilon') \qquad ((\mathscr{P},E,\epsilon),(\mathscr{P}',E',\epsilon') \in D) .
\end{displaymath} Now, we fix any ultrafilter $\mathscr{U}$ on $D$ such that \begin{equation}\tag{$i$}\label{ultrafilter.borel}
	\{ \{ i \in D \mid i_{0} \preceq i \} \mid i_{0} \in D \} \, \subseteq \, \mathscr{U} .
\end{equation} By~(2) and the axiom of choice, there exists $(a_{i})_{i \in D} \in (\mathbb{R}X)^{D}$ such that \begin{equation}\tag{$ii$}\label{positive.borel}
	\forall i \in D \colon \qquad \Vert a_{i} \Vert_{1} = 1 , \quad a_{i} \geq 0 
\end{equation} and \begin{equation}\tag{$iii$}\label{invariance.borel}
	\forall (\mathscr{P},E,\epsilon) \in D \ \forall g \in E \colon \qquad \Vert a_{(\mathscr{P},E,\epsilon)} - ga_{(\mathscr{P},E,\epsilon)} \Vert_{\mathscr{P}} \leq \epsilon .
\end{equation} Thanks to~\eqref{positive.borel}, the map \begin{displaymath}
	\mu \colon \, \B(X) \, \longrightarrow \, \R, \quad f \, \longmapsto \, \lim_{i \to \mathscr{U}} a_{i}(f)
\end{displaymath} is a well-defined mean on $\B(X)$. In order to prove $G$-invariance of $\mu$, let us note that, if $a \in \R X$, then \begin{equation}\tag{$iv$}\label{equivariance.borel}
	a(f \circ {\tau_{g}}) \, = \, \sum\nolimits_{x \in X} a(x)f(gx) \, = \, \sum\nolimits_{x \in X} a\!\left( g^{-1}x \right)\!f(x) \, = \, (ga)(f)
\end{equation} for all $g \in G$ and $f \in \R^{X}$, and \begin{equation}\tag{$v$}\label{estimate.borel}
	\vert a(\chi_{B}) \vert \, = \, \left\lvert \sum\nolimits_{x \in B} a(x) \right\rvert \, \leq \, \left\lvert \sum\nolimits_{x \in B} a(x) \right\rvert + \left\lvert \sum\nolimits_{x \in X\setminus B} a(x) \right\rvert \, = \, \Vert a \Vert_{\{ B, \, X\setminus B\}}
\end{equation} for every measurable $B \subseteq X$. Hence, for every $g \in G$ and any measurable $B \subseteq X$, we conclude that \begin{align*}
	\vert \mu(\chi_{B}) - \mu({\chi_{B}} \circ {\tau_{g}}) \vert \, &= \, \lim\nolimits_{i \to \mathscr{U}} \vert a_{i}({\chi_{B}}) - a_{i}({\chi_{B}} \circ \tau_{g}) \vert \, \stackrel{\eqref{equivariance.borel}}{=} \, \lim\nolimits_{i \to \mathscr{U}} \vert a_{i}({\chi_{B}}) - (ga_{i})({\chi_{B}}) \vert \\
	&= \, \lim\nolimits_{i \to \mathscr{U}} \vert (a_{i} - ga_{i})({\chi_{B}}) \vert \, \stackrel{\eqref{estimate.borel}}{\leq} \, \lim\nolimits_{i \to \mathscr{U}} \Vert a_{i} - ga_{i} \Vert_{\{ B, \, X\setminus B\}} \, \stackrel{\eqref{ultrafilter.borel}+\eqref{invariance.borel}}{=} \, 0 ,
\end{align*} i.e., $\mu({\chi_{B}} \circ {\tau_{g}}) = \mu(\chi_{B})$. Being a mean, $\mu$ is continuous with respect to the supremum norm $\Vert \cdot \Vert_{\infty}$ on~$\B(X)$. As the linear span of $\{ \chi_{B} \mid B \subseteq X \text{ measurable} \}$ is dense in $(\B(X),\Vert \cdot \Vert_{\infty})$, we conclude that $\mu(f \circ {\tau_{g}}) = \mu(f)$ for all $g \in G$ and $f \in \B(X)$, that is, $\mu$ is $G$-invariant.
	
(1)$\Longrightarrow$(2). We prove this implication by contraposition. Define \begin{displaymath}
	T \, \defeq \, \{ a \in \mathbb{R}X \mid \Vert a \Vert_{1} = 1, \, a\geq 0 \} .
\end{displaymath} We now assume that there exist some $\epsilon > 0$, some finite subset $E \subseteq G$, and some countable measurable partition $\mathscr{P}$ of $X$ such that $\sup_{g \in E} \Vert a - ga \Vert_{\mathscr{P}} > \epsilon$ for all $a \in T$. Then $0$ is not contained in the topological closure of the convex subset $\{ (a - ga)_{g \in E} \mid a \in T \}$ in the locally convex space $(\mathbb{R}X)^{E}$. Due to the separation theorem for locally convex spaces, there exists a continuous linear functional $F \in ((\mathbb{R}X)^E)'$ such that $F((a-ga)_{g \in E}) \geq 1$ for all $a \in T$. As $((\mathbb{R}X)^E)' \cong ((\mathbb{R}X)')^{E}$, this means that there is $(F_{g})_{g \in E} \in ((\mathbb{R}X)')^{E}$ such that $\sum_{g \in E} F_{g}(a-ga) \geq 1$ for all $a \in T$. By Lemma~\ref{lemma:duality}, we have $f_{g} \defeq \Phi (F_{g}) \in \B(X)$ for every $g \in E$, and furthermore \begin{displaymath}
	\left( \sum\nolimits_{g \in E} f_{g} - f_{g} \circ \tau_{g}\right)\! (x) \, = \, \sum\nolimits_{g \in E} \Phi (F_{g})(x) - \Phi (F_{g})(gx) \, = \, \sum\nolimits_{g \in E} F_{g}(\delta_{x} - g \delta_{x}) \, \geq \, 1
\end{displaymath} for all $x \in X$, that is, $\sum\nolimits_{g \in E} f_{g} - f_{g} \circ \tau_{g} \geq 1$. Hence, $\mu \!\left( \sum_{g \in E} f_{g} - f_{g} \circ \tau_{g}\right) \! \geq 1$ for any mean $\mu \colon \B(X) \to \R$. However, for any $G$-invariant linear functional $\mu \colon \B(X) \to \R$, \begin{displaymath}
	\mu \! \left( \sum\nolimits_{g \in E} f_{g} - f_{g} \circ \tau_{g}\right)\! \, = \, \sum\nolimits_{g \in E} \mu (f_{g}) - \mu (f_{g} \circ \tau_{g}) \, = \, 0 .
\end{displaymath} Therefore, $\B(X)$ must not admit a $G$-invariant mean. \end{proof}

\begin{corollary} Let $G$ be a topological group. The following are equivalent. \begin{itemize}
		\item[$(1)$] $G$ is Malliavin--Malliavin amenable. 
		\item[$(2)$] $G$ admits a dense subgroup $H\leq G$ such that, for every countable Borel partition $\mathscr{P}$ of $G$, every finite $E \subseteq H$, and every $\epsilon > 0$, there exists $a \in \mathbb{R}G$ with $\Vert a \Vert_{1} = 1$ and $a \geq 0$ such that \begin{displaymath}
			\qquad \forall g \in E \colon \quad \Vert a - ga \Vert_{\mathscr{P}} \leq \epsilon .
\end{displaymath} 
\item[$(3)$] The same property as in $(2)$, but for finite measurable partitions $\mathscr{P}$.\end{itemize} \end{corollary}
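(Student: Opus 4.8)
The plan is to obtain the corollary as an immediate instance of Proposition~\ref{proposition:day}, applied to a dense subgroup acting on $G$ by left translations. Fix a dense subgroup $H \leq G$ and equip $X \defeq G$ with its Borel $\sigma$-algebra, so that ``measurable'' and ``Borel'' coincide and $\B(X) = \B(G)$ is precisely the space of bounded Borel functions on $G$. Each left translation $\lambda_{h}$ with $h \in H$ is a homeomorphism of $G$, hence a Borel automorphism of $X$; thus $H$ acts by automorphisms on the measurable space $X$, and Proposition~\ref{proposition:day} applies verbatim with acting group $H$ and underlying space $X = G$.

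First I would match the invariance conditions. In the notation of the proposition the action of $h \in H$ is $\tau_{h} \colon x \mapsto hx$, so that $f \circ \tau_{h^{-1}} = {}^{h}f$ for every $f \in \B(G)$. Consequently a mean $\mu$ on $\B(G)$ satisfies $\mu({}^{h}f) = \mu(f)$ for all $h \in H$ if and only if $\mu(f \circ \tau_{h}) = \mu(f)$ for all $h \in H$, the two assertions differing only by the reindexing $h \leftrightarrow h^{-1}$ of the group $H$. Hence the existence of a mean on $\B(G)$ invariant under the left action of $H$ is exactly condition~(1) of Proposition~\ref{proposition:day} for the action of $H$ on $X = G$. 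Since $\mathbb{R}X = \mathbb{R}G$ and the measurable partitions of $X$ are the Borel partitions of $G$, conditions~(2) and~(3) of the proposition are word for word the bracketed conditions appearing in items~(2) and~(3) of the corollary, for this particular $H$.

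It then remains to quantify over $H$. By Proposition~\ref{proposition:day}, for each fixed dense $H \leq G$ the three statements ``$\B(G)$ admits a left-$H$-invariant mean'', ``the countable-partition Reiter condition holds for $H$'', and ``the finite-partition Reiter condition holds for $H$'' are equivalent. Prefixing each with ``there exists a dense subgroup $H \leq G$ such that \dots'' preserves the equivalence, and by definition the first resulting statement is M--M amenability of $G$, while the second and third are exactly items~(2) and~(3) of the corollary. The only point requiring care is the bookkeeping identifying the left-translation invariance of M--M amenability with the $\tau_{g}$-invariance of the proposition (together with the attendant check that left translations are Borel automorphisms); once this is settled, the statement follows with no further computation.
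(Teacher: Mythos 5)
Your proposal is correct and matches the paper's intent exactly: the paper states this corollary with no separate proof, treating it as the immediate specialization of Proposition~4.2 to a dense subgroup $H$ acting on $X = G$ (with the Borel $\sigma$-algebra) by left translations, which is precisely what you carry out. Your bookkeeping --- left translations are Borel automorphisms, the reindexing $h \leftrightarrow h^{-1}$ reconciling $\mu({}^{h}f)=\mu(f)$ with $\mu(f \circ \tau_{h})=\mu(f)$, and the existential quantification over dense $H$ preserving the three-way equivalence --- is exactly the routine verification the authors leave implicit.
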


Now we aim at proving a F\o lner-type characterization for Malliavin--Malliavin amenability of topological groups (Corollary~\ref{corollary:folner}), in the spirit of~\cite{ST}. This will require a few bits of preparation.

\begin{lemma}\label{lemma:induction} Let $G$ be a group acting by automorphisms on a measurable space $X$, in which $\{ x \}$ is measurable for every $x \in X$. Suppose that there exists a $G$-invariant mean on $\B(X)$, but no $G$-invariant mean on $\ell^{\infty}(X)$. Then, for every countable subgroup $H \leq G$ and every countable $H$-invariant subset $S\subseteq X$, there exists an $H$-invariant mean on $\B(X\setminus S)$. \end{lemma}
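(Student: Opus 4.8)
The plan is to produce a single $G$-invariant mean on $\B(X)$ that assigns $S$ measure strictly less than $1$, and then to restrict it to $X\setminus S$. Using the first hypothesis, fix a $G$-invariant mean $\mu$ on $\B(X)$ (writing $\tau_{g}\colon x\mapsto gx$ as in the preceding proof). Since $S$ is countable and every singleton is measurable, both $S$ and $X\setminus S$ are measurable. Suppose first that $\mu(\chi_{S})<1$, and set $\delta := 1-\mu(\chi_{S})>0$. For $f\in\B(X\setminus S)$ let $\hat{f}\in\B(X)$ be its extension by $0$ across $S$ (which lies in $\B(X)$ because $X\setminus S$ is measurable), and put $\nu(f):=\delta^{-1}\mu(\hat{f})$. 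Then $f\mapsto\hat{f}$ is linear and positive, so $\nu$ is positive and linear with $\nu(1_{X\setminus S})=\delta^{-1}\mu(\chi_{X\setminus S})=1$, i.e.\ a mean. For $H$-invariance I would use that $S$, hence $X\setminus S$, is $H$-invariant: for $h\in H$ one checks $\widehat{f\circ\tau_{h}}=\hat{f}\circ\tau_{h}$ on all of $X$ (both vanish on $S$ and agree on $X\setminus S$), so $\nu(f\circ\tau_{h})=\delta^{-1}\mu(\hat{f}\circ\tau_{h})=\delta^{-1}\mu(\hat{f})=\nu(f)$ by $G$-invariance of $\mu$. This gives the desired $H$-invariant mean on $\B(X\setminus S)$.

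The whole difficulty is therefore concentrated in excluding the remaining case $\mu(\chi_{S})=1$, and this is exactly where the second hypothesis (no $G$-invariant mean on $\ell^{\infty}(X)$) must be brought in. The crucial point is that, because $S$ is countable with measurable singletons, the product $f\chi_{S}$ lies in $\B(X)$ for every $f\in\ell^{\infty}(X)$: it is supported on $S$, where every function is measurable. So, assuming $\mu(\chi_{S})=1$, I would define $\tilde{\mu}\colon\ell^{\infty}(X)\to\R$ by $\tilde{\mu}(f):=\mu(f\chi_{S})$. This is plainly a positive linear functional with $\tilde{\mu}(1)=\mu(\chi_{S})=1$, hence a mean on all of $\ell^{\infty}(X)$; the task is to prove it is $G$-invariant, which would contradict the hypothesis and finish the argument.

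I expect the $G$-invariance of $\tilde{\mu}$ to be the main obstacle. First, invariance of $\mu$ gives $\mu(\chi_{gS})=\mu(\chi_{S}\circ\tau_{g^{-1}})=\mu(\chi_{S})=1$ for every $g\in G$. From $\mu(\chi_{S})=\mu(\chi_{g^{-1}S})=1$ and $\mu(\chi_{S\cup g^{-1}S})\le 1$, the identity $\chi_{S\cup g^{-1}S}=\chi_{S}+\chi_{g^{-1}S}-\chi_{S\cap g^{-1}S}$ forces $\mu(\chi_{S\cap g^{-1}S})=1$, whence $\mu(\chi_{S\triangle g^{-1}S})=0$. Now for $f\in\ell^{\infty}(X)$ and $g\in G$, $G$-invariance of $\mu$ on $\B(X)$ yields $\mu\big((f\chi_{S})\circ\tau_{g}\big)=\mu(f\chi_{S})$, while $(f\chi_{S})\circ\tau_{g}=(f\circ\tau_{g})\chi_{g^{-1}S}$; comparing this with $\tilde{\mu}(f\circ\tau_{g})=\mu\big((f\circ\tau_{g})\chi_{S}\big)$ and bounding the difference by $\Vert f\Vert_{\infty}\,\mu(\chi_{S\triangle g^{-1}S})=0$ gives $\tilde{\mu}(f\circ\tau_{g})=\tilde{\mu}(f)$. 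Thus $\tilde{\mu}$ is a $G$-invariant mean on $\ell^{\infty}(X)$, contradicting the second hypothesis; so $\mu(\chi_{S})<1$ after all, and the restriction from the first paragraph completes the proof. The only delicate bookkeeping I foresee is verifying $f\chi_{S}\in\B(X)$ and the symmetric-difference estimate $\mu(\chi_{S\triangle g^{-1}S})=0$ that powers the invariance of $\tilde{\mu}$.
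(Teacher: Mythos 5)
Your proof is correct, but it takes a genuinely different route from the paper's. The paper's argument first invokes a compactness argument to produce a countable subgroup $H' \geq H$ admitting no $H'$-invariant mean on $\ell^{\infty}(X)$, saturates $S$ to the countable $H'$-invariant set $T = H'S$, and deduces $\mu(\chi_{T}) = 0$ (hence $\mu(\chi_{S}) = 0$), since $\mu(\chi_{T}) > 0$ would let one condition on $T$ and obtain an $H'$-invariant mean on $\ell^{\infty}(X)$; the desired mean is then simply $\mu \circ \Phi$, with $\Phi$ the extension-by-zero operator and no renormalization needed. You instead settle for the weaker estimate $\mu(\chi_{S}) < 1$, which suffices after renormalizing by $\delta = \mu(\chi_{X \setminus S})$, and the way you exclude $\mu(\chi_{S}) = 1$ is the genuinely new ingredient: where the paper arranges \emph{exact} invariance of the support set (that is why it must pass to $T = H'S$ and hence needs $H$, $H'$ countable to keep $T$ countable), you substitute the almost-invariance $\mu\bigl(\chi_{S \triangle g^{-1}S}\bigr) = 0$, valid for \emph{every} $g \in G$ because $\mu(\chi_{gS}) = 1$ for all $g$, so that conditioning on $S$ yields a fully $G$-invariant mean on $\ell^{\infty}(X)$ and contradicts the hypothesis directly — no auxiliary subgroup $H'$ and no compactness argument at all. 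A side effect is that your argument never uses countability of $H$: the conditioned mean $\nu$ is invariant under the entire setwise stabilizer of $S$ in $G$, a formally stronger conclusion. What the paper's saturation buys in exchange is the sharper output $\mu(\chi_{S}) = 0$ (so the restricted mean needs no renormalization) and a saturation template reused in the next lemma; but for the statement as given, your version is complete, and the measurability bookkeeping you flag ($S$, $g^{-1}S$, $f\chi_{S}$, and the zero-extension $\hat{f}$ all lying in the right spaces because $S$ is countable with measurable singletons) is exactly right.
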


\begin{proof} Consider a countable subgroup $H \leq G$ and a countable $H$-invariant subset $S\subseteq X$. Since there is no $G$-invariant mean on $\ell^{\infty}(X)$, a straightforward compactness argument shows the existence of a countable subgroup $H' \leq G$ such that there is no $H'$-invariant mean on $\ell^{\infty}(X)$. Without loss of generality, we may and will assume that $H \subseteq H'$. Note that $T \defeq H'S$ is countable. By assumption, there exists a $G$-invariant mean $\mu \colon \B(X) \to \R$. Considering the indicator function $\chi_{T} \colon X \to \{ 0,1 \}$, we observe that $\mu(\chi_{T}) = 0$: otherwise, \begin{displaymath}
	\ell^{\infty}(X) \, \longrightarrow \, \R, \quad f \, \longmapsto \, \frac{\mu(f\cdot \chi_{T})}{\mu(\chi_{T})}
\end{displaymath} would constitute an $H'$-invariant mean, contradicting our choice of $H'$. Furthermore, $\Phi \colon \B(X\setminus S) \to \B(X)$ given by \begin{displaymath}
	\Phi(f)(x) \, \defeq \, \begin{cases}
			\, f(x) & \text{if } x \in X\setminus S, \\
			\, 0 & \text{otherwise}
		\end{cases} \qquad (f \in \B(X\setminus S), \, x \in X)
\end{displaymath} is a well-defined, $H$-equivariant, positive, linear operator. As $\mu(\chi_{T}) = 0$ and thus $\mu (\chi_{S}) = 0$, we see that \begin{displaymath}
	\mu(\Phi(1)) \, = \, \mu(\chi_{X\setminus S}) \, = \, \mu(\chi_{X\setminus S}) + \mu(\chi_{S}) \, = \, \mu(\chi_{X\setminus S} + \chi_{S}) \, = \, \mu(1) \, = \, 1 .
\end{displaymath} Hence, $\mu \circ \Phi \colon \B(X \setminus S) \to \R$ is an $H$-invariant mean, as desired. \end{proof}

\begin{lemma}\label{lemma:day.modified.3} Let $G$ be a group acting by automorphisms on a measurable space $X$, in which $\{ x \}$ is measurable for every $x \in X$. Suppose that there exists a $G$-invariant mean on $\B(X)$, but no $G$-invariant mean on $\ell^{\infty}(X)$. For every countable measurable partition $\mathscr{P}$ of $X$, every finite set $E \subseteq G$, and every $\epsilon > 0$, there exists a finite subset $F \subseteq X$ such that \begin{displaymath}
	\forall g \in E \colon \quad \Vert \chi_{F} - \chi_{gF} \Vert_{\mathscr{P}} \leq \epsilon \vert F \vert .
\end{displaymath} \end{lemma}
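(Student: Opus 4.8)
The plan is to reduce the statement to the Reiter-type criterion of Proposition~\ref{proposition:day} and then pass from an approximately invariant function to a finite level set by a Namioka--Day layer-cake argument, the whole difficulty being that the semi-norm $\Vert\cdot\Vert_{\mathscr P}$ permits cancellation inside the blocks. Fix $\mathscr P$, a finite $E\subseteq G$ and $\epsilon>0$, put $H\defeq\langle E\rangle$, and apply Proposition~\ref{proposition:day}(1)$\Rightarrow$(2) to the given $G$-invariant mean on $\B(X)$: for a small auxiliary $\delta>0$ it yields $a\in\mathbb R X$ with $a\ge0$, $\Vert a\Vert_1=1$ and $\Vert a-ga\Vert_{\mathscr P}\le\delta$ for all $g\in E$. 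Writing $F_s\defeq\{x\mid a(x)>s\}$ (finite for $s>0$, since $\vert F_s\vert\le s^{-1}$), the layer-cake formula gives, for each block $P$ and each $g$,
\[
	\sum\nolimits_{x\in P}(a-ga)(x)=\int_0^\infty\!\bigl(\vert F_s\cap P\vert-\vert gF_s\cap P\vert\bigr)\,ds,\qquad \int_0^\infty\vert F_s\vert\,ds=\Vert a\Vert_1=1 .
\]
Were the integrand $\vert F_s\cap P\vert-\vert gF_s\cap P\vert$ of constant sign in $s$, summing $\vert\cdot\vert$ over $P$ and integrating would give $\int_0^\infty\Vert\chi_{F_s}-\chi_{gF_s}\Vert_{\mathscr P}\,ds=\Vert a-ga\Vert_{\mathscr P}\le\delta$, and averaging over $s$ and over $g\in E$ against $\int_0^\infty\vert F_s\vert\,ds=1$ would produce a level $s$ with $\Vert\chi_{F_s}-\chi_{gF_s}\Vert_{\mathscr P}\le\epsilon\vert F_s\vert$ for every $g$, finishing the proof.

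The obstacle is precisely that this integrand may change sign: a block $P$ may contain support points of $a$ and of $ga$ whose values interleave, so $\sum_{x\in P}(a-ga)(x)$ is tiny through internal cancellation while $\int_0^\infty\bigl\vert\,\vert F_s\cap P\vert-\vert gF_s\cap P\vert\,\bigr\vert\,ds$ is large. Quantitatively the layer-cake defect in a block is bounded only by $2\min(a(P),ga(P))$, and $\sum_P\min(a(P),ga(P))=1-\tfrac12\Vert a-ga\Vert_{\mathscr P}\ge1-\tfrac\delta2$, so the error is of order one: the naive argument is genuinely useless in the semi-norm. The cancellation disappears exactly when each block meets $\spt(a)\cup\bigcup_{g\in E}g\,\spt(a)$ in at most one point, for then each integrand is sign-monotone and $\Vert\cdot\Vert_{\mathscr P}$ restricts to $\Vert\cdot\Vert_1$ on the support.

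I would therefore run the Reiter step not on $\mathscr P$ but on a countable measurable refinement $\mathscr Q\supseteq\mathscr P$ separating the relevant finite support (legitimate, since singletons are measurable), after which $\Vert\chi_F-\chi_{gF}\Vert_{\mathscr P}\le\Vert\chi_F-\chi_{gF}\Vert_1$ delivers the conclusion. The chicken-and-egg difficulty -- $\mathscr Q$ must separate $\spt(a)$, yet $a$ is produced by Reiter applied to $\mathscr Q$, and refining only enlarges the semi-norm so $a$ cannot be reused -- is where the two standing hypotheses become indispensable. Since there is no $G$-invariant mean on $\ell^{\infty}(X)$, Lemma~\ref{lemma:induction} provides, for $H$ and every countable $H$-invariant $S\subseteq X$, an $H$-invariant mean on $\B(X\setminus S)$; equivalently, the approximately invariant functions of Proposition~\ref{proposition:day} may always be demanded to be supported off any prescribed countable $H$-invariant set. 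The scheme is then to iterate over the directed set of pairs (refinement of $\mathscr P$, finite set to be separated), relocating the approximately invariant function to fresh points avoiding the countable union of the supports seen so far, enlarging the refinement to separate the new support, and organising the data along an ultrafilter on this directed set, as in the proof of Theorem~\ref{th:coco}.

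The main obstacle is the convergence of this scheme. The mean on $\B(X)$ charges no countable invariant set -- this is the content of Lemma~\ref{lemma:induction} -- so the mass of every approximately invariant function escapes to infinity and its support can never be confined to a countable set that a single countable partition could separate once and for all; reconciling ``separation of the support by a countable refinement'' with this non-atomicity is the crux. I expect it cleanest to argue contrapositively: assuming a $G$-invariant mean on $\B(X)$ together with the failure of the finite $\mathscr P$-F\o lner condition, one aims to manufacture a $G$-invariant mean on $\ell^{\infty}(X)$, contradicting the second hypothesis. This recasts the existence claim as the production of the forbidden $\ell^{\infty}$-mean -- the natural candidate being to force a derived $H'$-invariant $\B$-mean to charge a countable invariant set and then invoke the very mechanism used inside Lemma~\ref{lemma:induction} -- and so isolates the separation/non-atomicity tension as the single point that must be settled.
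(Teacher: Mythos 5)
There is a genuine gap, and your proposed fix points in exactly the wrong direction. You correctly diagnose that the layer-cake argument dies on in-block cancellation, but your remedy --- refine $\mathscr{P}$ to a countable partition $\mathscr{Q}$ that \emph{separates} $\spt(a)\cup\bigcup_{g\in E}g\,\spt(a)$, so that $\Vert\cdot\Vert_{\mathscr{Q}}$ agrees with $\Vert\cdot\Vert_{1}$ on the relevant functions --- is provably unattainable in the only case the lemma is about. If for every finite $E$ and $\epsilon>0$ you could produce $a\geq 0$, $\Vert a\Vert_{1}=1$, with $\Vert a-ga\Vert_{\mathscr{Q}}\leq\epsilon$ for a partition separating the supports, then $\Vert a-ga\Vert_{1}\leq\epsilon$, and Day's classical argument would yield a $G$-invariant mean on $\ell^{\infty}(X)$ --- contradicting the standing hypothesis of the lemma. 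So no amount of iterating refinements along an ultrafilter can rescue the scheme: separation upgrades the semi-norm to $\ell^{1}$, and $\ell^{1}$-almost-invariance is exactly what the hypotheses forbid. Your closing contrapositive plan (failure of the F\o lner condition plus a mean on $\B(X)$ should manufacture a mean on $\ell^{\infty}(X)$) is a legitimate logical framing but is left entirely unexecuted; you yourself flag the ``separation/non-atomicity tension'' as unsettled, and that tension is the whole content of the lemma.

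The paper's proof resolves it by the opposite move: instead of separating support points into small blocks, it parks them in \emph{infinite} blocks and exploits that $\Vert\cdot\Vert_{\mathscr{P}}$ only sees block-sums. Set $\mathscr{Q}\defeq\mathscr{P}\vee\bigvee_{g\in E}g^{-1}\mathscr{P}$, let $T$ be the (countable) union of the finite blocks of $\mathscr{Q}$, and apply Lemma~\ref{lemma:induction} with $H\defeq\langle E\rangle$, $S\defeq HT$ to get an $H$-invariant mean on $\B(X\setminus S)$; Proposition~\ref{proposition:day} then yields $a\geq 0$, $\Vert a\Vert_{1}=1$, $\Vert a-ga\Vert_{\mathscr{P}}\leq\epsilon$ for $g\in E$, with $a$ vanishing on $S$ --- so every point of $\spt(a)$ lies in an infinite $\mathscr{Q}$-block. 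Perturb $a$ to be rational, write $a=b/n$ with $b$ integer-valued, and spread each mass: choose pairwise disjoint sets $\phi(x)\subseteq\pi_{\mathscr{Q}}(x)$ with $\vert\phi(x)\vert=b(x)$ (possible precisely because the blocks are infinite), and put $F\defeq\bigcup_{x\in\spt(b)}\phi(x)$, so $\vert F\vert=n$. Since $\mathscr{Q}$ refines both $\mathscr{P}$ and every $g^{-1}\mathscr{P}$, the replacement has \emph{zero} cost in the semi-norm:
\begin{displaymath}
	\left\lVert ga-\tfrac{1}{\vert F\vert}\chi_{gF}\right\rVert_{\mathscr{P}}\,=\,0\qquad(g\in E\cup\{e\}),
\end{displaymath}
and the triangle inequality gives $\Vert\chi_{F}-\chi_{gF}\Vert_{\mathscr{P}}\leq\epsilon\vert F\vert$. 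In short: the hypothesis of no mean on $\ell^{\infty}(X)$ is used (via Lemma~\ref{lemma:induction}) not to run a contradiction, but constructively, to push $\spt(a)$ off the countable set of finite $\mathscr{Q}$-blocks so that the mass-spreading is possible; the ``non-atomicity'' you saw as an obstruction is precisely the enabling mechanism.
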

	
\begin{proof} Consider a countable measurable partition $\mathscr{P}$ of $X$, a finite subset $E \subseteq G$, and some $\epsilon > 0$. Since the measurable partition $\mathscr{Q} \defeq \mathscr{P} \vee \bigvee_{g \in E} g^{-1}\mathscr{P}$ of $X$ is countable, the set $T \defeq \bigcup \{ Q \in \mathscr{Q} \mid Q \text{ finite} \}$ is countable, too. Applying Lemma~\ref{lemma:induction} to $H \defeq \langle E \rangle$ and $S \defeq HT$ and then applying Proposition~\ref{proposition:day} to the action of $H$ on $X\setminus S$, we find $a \in \mathbb{R}X$ with $\Vert a \Vert_{1} = 1$ and $a \geq 0$ such that \begin{enumerate}
	\item[$\bullet$] $\Vert a - ga \Vert_{\mathscr{P}} \leq \epsilon$ for all $g \in E$,
	\item[$\bullet$] $a(s) = 0$ for all $s \in S$.
\end{enumerate} Furthermore, we may assume that $a$ is an element of ${\mathbb{Q}^{X}} \cap {\R X}$. Now, there exist $n \in \mathbb{N}$ and $b \in {\mathbb{Z}^{X}} \cap {\R X}$ such that $a(x) = \frac{b(x)}{n}$ for all $x \in X$. Consider the finite set \begin{displaymath}
	R \, \defeq \, \spt (a) \, = \, \spt (b) .
\end{displaymath} We observe that $R \subseteq X\setminus S \subseteq X\setminus T$, whence the set $\pi_{\mathscr{Q}}(x)$ is infinite for every $x \in R$. Consequently, there exists $\phi \colon R \to \mathscr{F}(X)$ with \begin{enumerate}
	\item[$\bullet$] $\phi(x) \subseteq \pi_{\mathscr{Q}}(x)$ for all $x \in R$,
	\item[$\bullet$] $\vert \phi(x)\vert = b(x)$ for all $x \in R$,
	\item[$\bullet$] $\phi(x) \cap \phi(y) = \emptyset$ for any two distinct $x,y \in R$.
\end{enumerate} Note that $\vert F \vert = n$ for $F \defeq \bigcup \{ \phi(x) \mid x \in R \}$. Now, for every $g \in E\cup \{ e \}$, \begin{displaymath}
	\left\lVert ga - \tfrac{1}{\vert F \vert}\chi_{gF} \right\rVert_{\mathscr{P}} \, = \, \tfrac{1}{n}\sum\nolimits_{B \in \mathscr{P}} \left\vert \sum\nolimits_{x \in R \cap {g^{-1}B}} b(x) - \vert \phi (x) \vert \right\vert \, = \, 0
\end{displaymath} and therefore \begin{displaymath}
	\left\lVert \tfrac{1}{\vert F \vert}\chi_{F} - \tfrac{1}{\vert F \vert}\chi_{gF} \right\rVert_{\mathscr{P}} \, \leq \, \left\lVert \tfrac{1}{\vert F \vert}\chi_{F} - a \right\rVert_{\mathscr{P}} + \Vert a - ga \Vert_{\mathscr{P}} + \left\lVert ga - \tfrac{1}{\vert F \vert}\chi_{gF} \right\rVert_{\mathscr{P}} \, \leq \, \epsilon .\qedhere
\end{displaymath} \end{proof}

Once again, let us clarify some notation. Let $\mathscr{B} = (X,Y,R)$ be a \emph{bipartite graph}, i.e., a triple consisting of two finite sets $X$ and $Y$ and some $R \subseteq X \times Y$. If $S \subseteq X$, then let $N_{\mathscr{B}}(S) \defeq \{ y \in Y \mid \exists x \in S \colon (x,y) \in R \}$. A \emph{matching} in $\mathscr{B}$ is an injection $\phi \colon D \to Y$ such that $D \subseteq X$ and $(x,\phi(x)) \in R$ for all $x \in D$. The \emph{matching number} of $\mathscr{B}$ is defined to be \begin{displaymath}
	\match (\mathscr{B}) \, \defeq \, \sup \{ \lvert D \rvert \mid \exists\,\phi \colon D \to Y \textnormal{ matching in } \mathscr{B} \} .
\end{displaymath} For convenience, we recall a relevant version of Hall's matching theorem.

\begin{theorem}[\cite{Hall35}; \cite{Ore}, Theorem~8]\label{theorem:hall} If $\mathscr{B} = (X,Y,R)$ is a bipartite graph, then \begin{displaymath}
	\match (\mathscr{B}) \, = \, |X| - \sup \{ |S| - |N_{\mathscr{B}}(S)| \mid S \subseteq X \} .
	\end{displaymath} \end{theorem}

We now specify a certain family of matchings induced by countable measurable partitions.

\begin{definition} Let $X$ be a set. If $E$ and $F$ are finite subsets of $X$ and $\mathscr{P}$ is a partition of $G$, then we define the bipartite graph \begin{displaymath}
	\mathscr{B} (E,F,\mathscr{P}) \, \defeq \, (E,F,\{ (x,y) \in E \times F \mid \pi_{\mathscr{P}}(x) = \pi_{\mathscr{P}}(y) \})
\end{displaymath} and abbreviate $\match (E,F,\mathscr{P}) \defeq \match (\mathscr{B}(E,F,\mathscr{P}))$. \end{definition}

Now everything is in place to prove the desired F\o lner criterion.

\begin{theorem}\label{theorem:folner} Let $G$ be a group acting by automorphisms on a measurable space~$X$, in which $\{ x \}$ is measurable for every $x \in X$. The following are equivalent. \begin{itemize}
	\item[$(1)$] There exists a $G$-invariant mean on $\B(X)$. 
	\item[$(2)$] For every countable measurable partition $\mathscr{P}$ of $X$, every finite $E \subseteq G$, and every $\theta \in (0,1)$, there exists a non-empty finite subset $F \subseteq G$ such that \begin{displaymath}
		\qquad \forall g \in E \colon \quad \match (F,gF,\mathscr{P}) \geq \theta \vert F \vert .
\end{displaymath} 
\item[$(3)$] The same property as in $(2)$, but for finite measurable partitions $\mathscr{P}$.
\end{itemize} \end{theorem}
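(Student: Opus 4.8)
The plan is to reduce the whole statement to the $\ell^{1}$-type F\o lner and Reiter facts already established, by means of a single combinatorial identity that computes the matching number in terms of the seminorm $\Vert\cdot\Vert_{\mathscr{P}}$.

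\textbf{The bridge identity.} For a finite $F \subseteq X$, an element $g \in G$, and a measurable partition $\mathscr{P}$ of $X$, I first observe that the bipartite graph $\mathscr{B}(F,gF,\mathscr{P})$ decomposes as the disjoint union, over the parts $P \in \mathscr{P}$, of the complete bipartite graphs on vertex sets $F \cap P$ and $gF \cap P$ (two vertices are adjacent precisely when they lie in a common part). Hence, either directly or through Theorem~\ref{theorem:hall}, $\match(F,gF,\mathscr{P}) = \sum_{P \in \mathscr{P}} \min(\vert F \cap P\vert, \vert gF \cap P\vert)$. Since $g$ acts bijectively we have $\vert gF\vert = \vert F\vert$, and applying $\min(a,b) = \tfrac12(a+b-\vert a-b\vert)$ termwise yields the clean identity \[ \match(F,gF,\mathscr{P}) \, = \, \vert F\vert - \tfrac12 \Vert \chi_F - \chi_{gF}\Vert_{\mathscr{P}} . \] In particular, for $\theta \in (0,1)$ the requirement $\match(F,gF,\mathscr{P}) \geq \theta\vert F\vert$ is equivalent to $\Vert \chi_F - \chi_{gF}\Vert_{\mathscr{P}} \leq 2(1-\theta)\vert F\vert$. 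This is exactly the kind of estimate produced by Lemma~\ref{lemma:day.modified.3}, and it is what turns the theorem into a repackaging of earlier results.

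\textbf{The easy directions.} Implication (2)$\Rightarrow$(3) is immediate, as every finite partition is countable. For (3)$\Rightarrow$(1), I fix a finite measurable partition $\mathscr{P}$, a finite $E \subseteq G$, and $\epsilon \in (0,2)$, and apply (3) with $\theta = 1 - \tfrac{\epsilon}{2}$ to obtain a non-empty finite $F$ with $\Vert \chi_F - \chi_{gF}\Vert_{\mathscr{P}} \leq \epsilon\vert F\vert$ for all $g \in E$. Setting $a \defeq \vert F\vert^{-1}\chi_F$ gives $a \geq 0$, $\Vert a\Vert_1 = 1$, and $\Vert a - ga\Vert_{\mathscr{P}} \leq \epsilon$ for every $g \in E$, which is precisely condition~(3) of Proposition~\ref{proposition:day}; hence $\B(X)$ carries a $G$-invariant mean.

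\textbf{The remaining direction (1)$\Rightarrow$(2) and the main obstacle.} Here I distinguish two cases according to whether $\ell^{\infty}(X)$ carries a $G$-invariant mean. If it does \emph{not}, then (1) together with this case hypothesis places us exactly in the situation of Lemma~\ref{lemma:day.modified.3}: taking $\epsilon = 2(1-\theta)$ produces a non-empty finite $F$ with $\Vert \chi_F - \chi_{gF}\Vert_{\mathscr{P}} \leq \epsilon\vert F\vert$, whence the bridge identity gives $\match(F,gF,\mathscr{P}) \geq \theta\vert F\vert$. If, on the other hand, $\ell^{\infty}(X)$ \emph{does} admit a $G$-invariant mean, then the spreading-out mechanism behind Lemma~\ref{lemma:day.modified.3} is unavailable, and this is the genuine obstacle. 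In that case I fall back on the classical F\o lner theorem for amenable group actions: an invariant mean on $\ell^{\infty}(X)$ yields, by the Day convexity argument, vectors $a \geq 0$ with $\Vert a\Vert_1 = 1$ and $\Vert a - ga\Vert_1$ arbitrarily small, and Namioka's layer-cake argument converts these into honest F\o lner sets, i.e.\ non-empty finite $F$ with $\vert gF \triangle F\vert \leq \epsilon\vert F\vert$ for all $g \in E$. Since $\Vert \chi_F - \chi_{gF}\Vert_{\mathscr{P}} \leq \Vert \chi_F - \chi_{gF}\Vert_1 = \vert gF \triangle F\vert$ for every partition $\mathscr{P}$, the matching bound $\match(F,gF,\mathscr{P}) \geq \theta\vert F\vert$ follows once more. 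Throughout I only need to track the non-emptiness of $F$, which is guaranteed by $\Vert a\Vert_1 = 1$. I expect no hard estimate anywhere: the sole subtlety is recognizing that the case split in (1)$\Rightarrow$(2) is forced, and supplying the classical action-F\o lner theorem in the amenable-$\ell^{\infty}$ case.
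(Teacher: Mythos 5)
Your proof is correct, and its overall architecture coincides with the paper's: the same reduction of (3)$\Rightarrow$(1) to condition (3) of Proposition~\ref{proposition:day}, and the same case split in (1)$\Rightarrow$(2) according to whether $\ell^{\infty}(X)$ carries a $G$-invariant mean --- Rosenblatt's F\o lner theorem for actions in the amenable case (which the paper simply cites and you re-derive via the standard Day--Namioka argument), and Lemma~\ref{lemma:day.modified.3} in the other case, where the split is indeed forced since that lemma's hypotheses include the non-existence of an invariant mean on $\ell^{\infty}(X)$. Where you genuinely diverge is the combinatorial core. The paper passes from the seminorm bound of Lemma~\ref{lemma:day.modified.3} (used there with $\epsilon = 1-\theta$) to the matching bound via Hall's theorem (Theorem~\ref{theorem:hall}), through the one-sided deficiency estimate $\vert S\vert - \vert N_{\mathscr{B}}(S)\vert \leq (1-\theta)\vert F\vert$; you instead note that $\mathscr{B}(F,gF,\mathscr{P})$ is a disjoint union of complete bipartite graphs and extract the exact identity $\match(F,gF,\mathscr{P}) = \vert F\vert - \tfrac{1}{2}\Vert \chi_{F}-\chi_{gF}\Vert_{\mathscr{P}}$, using $\vert gF\vert = \vert F\vert$ and $\min(a,b)=\tfrac{1}{2}(a+b-\vert a-b\vert)$. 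This identity is correct (one can also recover it from Hall, since $\sup_{S}(\vert S\vert - \vert N_{\mathscr{B}}(S)\vert) = \sum_{P}\max(0,\vert F\cap P\vert - \vert gF\cap P\vert) = \tfrac{1}{2}\Vert\chi_{F}-\chi_{gF}\Vert_{\mathscr{P}}$), and it buys you two things: Hall's theorem becomes dispensable, and the equivalence between the matching condition and the Reiter-type condition becomes an exact two-way computation --- in particular your (3)$\Rightarrow$(1), via $a \defeq \vert F\vert^{-1}\chi_{F}$ and $\theta = 1-\tfrac{\epsilon}{2}$, makes explicit the step the paper dismisses as ``easily checked,'' and shows the sharp constant ($\match \geq \theta\vert F\vert$ iff $\Vert\chi_{F}-\chi_{gF}\Vert_{\mathscr{P}} \leq 2(1-\theta)\vert F\vert$, so $\epsilon = 2(1-\theta)$ suffices in Lemma~\ref{lemma:day.modified.3}). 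The remaining details check out: $\Vert\chi_{F}-\chi_{gF}\Vert_{\mathscr{P}} \leq \vert gF \triangle F\vert$ is the partwise triangle inequality, non-emptiness of $F$ comes from $\Vert a\Vert_{1}=1$ in the construction behind Lemma~\ref{lemma:day.modified.3}, and restricting to $\epsilon \in (0,2)$ in (3)$\Rightarrow$(1) loses nothing because $\Vert a - ga\Vert_{\mathscr{P}} \leq 2$ holds for any probability vector $a$.
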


\begin{proof} (2)$\Longrightarrow$(3). Trivial. 

(3)$\Longrightarrow$(1). It is easily checked that (3) implies the third condition in Proposition~\ref{proposition:day}, thus entails (1).
	
(1)$\Longrightarrow$(2). If there exists a $G$-invariant mean on $\ell^{\infty}(X)$, then the desired conclusion by the well-known F\o lner-type characterization of amenability of group actions on sets, which is classical work of Rosenblatt~\cite{rosenblatt}. Henceforth, we thus assume that there is no $G$-invariant mean on $\ell^{\infty}(X)$. Consider any countable measurable partition $\mathscr{P}$ of $X$, any finite subset $E \subseteq G$, and any $\theta \in (0,1)$. By Lemma~\ref{lemma:day.modified.3}, there is a finite non-empty set $F \subseteq X\setminus S$ such that $\Vert \chi_{F} - \chi_{gF} \Vert_{\mathscr{P}} \leq (1-\theta)\vert F \vert$ for all $g \in E$. The latter means that \begin{displaymath}
		\forall g \in E \colon \quad \sum\nolimits_{B \in \mathscr{P}} \bigl\lvert \vert B \cap F \vert - \vert B \cap gF \vert \bigr\rvert \leq( 1 - \theta )\vert F \vert .
	\end{displaymath} We are going to show that $\match (F,gF,\mathscr{P}) \geq \theta |F|$ for all $g \in E$. To this end, let $g \in E$. Consider the bipartite graph $\mathscr{B} \defeq \mathscr{B}(F,gF,\mathscr{P})$. Let $S \subseteq F$ and $T \defeq N_{\mathscr{B}}(S)$. For $\mathscr{Q} \defeq \{ B \in \mathscr{P} \mid S \cap B \ne \emptyset \}$, we observe that $T = \{ y \in gF \mid \exists B \in \mathscr{Q} \colon \, y \in B \}$ and \begin{align*}
		|S| \, &= \, \sum\nolimits_{B \in \mathscr{P}} \vert B \cap S \vert \, \leq \, \sum\nolimits_{B \in \mathscr{Q}} \vert B \cap F \vert \leq (1 - \theta)\vert F \vert + \sum\nolimits_{B \in \mathscr{Q}} \vert B \cap gF \vert \\
		&= \, (1 - \theta)\vert F \vert + \sum\nolimits_{B \in \mathscr{P}} \vert B \cap T \vert \, = \, (1-\theta) \vert F \vert + \vert T \vert ,
	\end{align*} that is, $|S| - \left|N_{\mathscr{B}}(S)\right| \leq (1-\theta ) \vert F\vert$. By virtue of Theorem~\ref{theorem:hall}, this entails that \begin{displaymath}
		\match (F,gF,\mathscr{P}) \, = \, |F|-\sup\nolimits_{S \subseteq F} \left( |S| - \left| N_{\mathscr{B}}(S) \right|\right) \, \geq \, |F|-(1-\theta )|F| \, = \, \theta \vert F \vert .\qedhere
	\end{displaymath} \end{proof}

\begin{corollary}\label{corollary:folner} For a Hausdorff topological group $G$, the following are equivalent. \begin{itemize}
		\item[$(1)$] $G$ is Malliavin--Malliavin amenable. 
		\item[$(2)$] $G$ admits a dense subgroup $H\leq G$ such that, for every countable Borel partition $\mathscr{P}$ of $G$, every finite $E \subseteq H$, and every $\theta \in (0,1)$, there exists a non-empty finite subset $F \subseteq G$ such that \begin{displaymath}
			\qquad \forall g \in E \colon \quad \match (F,gF,\mathscr{P}) \geq \theta \vert F \vert .
\end{displaymath}
\item[$(3)$] The same property as in $(2)$, but for finite measurable partitions $\mathscr{P}$.
\end{itemize} \end{corollary}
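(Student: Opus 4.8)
The goal is to deduce Corollary~\ref{corollary:folner} from Theorem~\ref{theorem:folner} by specialising the latter to a suitable $G$-action and then bridging between the topological group $G$ and a dense subgroup $H$. The plan is to take the measurable space to be $G$ itself, equipped with its Borel $\sigma$-algebra, on which $G$ acts by left translation via $\tau_{g}(x) = gx$; since $G$ is Hausdorff, singletons are closed, hence Borel, so the hypothesis of Theorem~\ref{theorem:folner} that $\{x\}$ be measurable is satisfied. With this setup, the F\o lner condition (2) of Theorem~\ref{theorem:folner} is literally a statement about finite subsets $F \subseteq G$ and matchings $\match(F,gF,\mathscr{P})$, so conditions (2) and (3) of the present corollary are exactly conditions (2) and (3) of Theorem~\ref{theorem:folner} applied to the dense subgroup $H$ acting on $G$ by left translation.

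The real content is the equivalence between the existence of the various means and the notion of M--M amenability of $G$, which by definition asks for a mean on $\B(G)$ invariant under left translation by some dense subgroup. First I would fix a dense subgroup $H \leq G$ and apply Theorem~\ref{theorem:folner} with the group in that theorem taken to be $H$, acting on $X = G$ by left translations. Theorem~\ref{theorem:folner} then gives: there exists an $H$-invariant mean on $\B(G)$ if and only if the F\o lner/matching condition holds for every countable Borel partition, finite $E \subseteq H$, and $\theta \in (0,1)$, and equivalently for finite partitions. Reading off the three clauses of the corollary, (1) says some dense $H$ admits an $H$-invariant mean on $\B(G)$, which is precisely M--M amenability; and (2), (3) are the matching criteria for that same $H$. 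Thus each of the three corollary conditions is obtained by existentially quantifying over the dense subgroup $H$ in the corresponding clause of Theorem~\ref{theorem:folner}.

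Concretely, the argument runs: (1)$\Rightarrow$(2) holds because M--M amenability furnishes a dense $H$ with an $H$-invariant mean on $\B(G)$, and Theorem~\ref{theorem:folner}(1)$\Rightarrow$(2), applied to $H \curvearrowright G$, yields the matching condition for that very $H$; (2)$\Rightarrow$(3) is immediate since finite partitions are a special case of countable ones; and (3)$\Rightarrow$(1) holds because the matching condition for the witnessing dense $H$ feeds into Theorem~\ref{theorem:folner}(3)$\Rightarrow$(1) to produce an $H$-invariant mean on $\B(G)$, which is exactly the mean required for M--M amenability. The one point demanding care is the book-keeping of which group plays the role of ``$G$'' in Theorem~\ref{theorem:folner}: throughout it is the dense subgroup $H$, while the measurable space is the ambient topological group $G$ with its Borel structure, and ``countable Borel partition of $G$'' in the corollary matches ``countable measurable partition of $X$'' in the theorem.

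I expect the main (though modest) obstacle to be merely verifying that the same dense subgroup $H$ can be used consistently across all three clauses, so that the existential quantifier over $H$ commutes correctly with the equivalences of Theorem~\ref{theorem:folner}; since that theorem is a genuine equivalence for each fixed acting group, fixing $H$ first and applying the theorem to $H \curvearrowright G$ resolves this cleanly, and no further analysis is needed beyond confirming the Hausdorff hypothesis supplies measurable singletons.
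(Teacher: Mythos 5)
Your proposal is correct and coincides with the paper's intended argument: the corollary is stated without separate proof precisely because it is the specialization of Theorem~\ref{theorem:folner} to the dense subgroup $H$ acting on $X=G$ by left translations (which are Borel automorphisms), with the Hausdorff hypothesis guaranteeing that singletons are closed, hence Borel, exactly as you note. Your book-keeping of the existential quantifier over $H$ --- fixing the witnessing $H$ in each implication before invoking the theorem's equivalence --- is the right and complete way to handle the only delicate point.
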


\section{Summary of questions}
\label{:summary}

\begin{enumerate}
	\item[(1)] Is every skew-amenable topological group amenable? Probably not, but we do not know any distinguishing example.
	\item[(2)] Is every Polish amenable topological group amenable in the sense of Malliavin--Malliavin, that is, does every such topological group admit a mean on bounded Borel functions invariant under the left action by a dense subgroup?
	\item[(3)] In particular, are the following topological groups M--M amenable? \begin{itemize}
					\item[(a)] $\mbox{Aut}(\Q,\leq)$ with the topology of point-wise convergence (induced by the discrete topology on $\mathbb{Q}$).
					\item[(b)] The topological group from the last example in \cite{ST}.
				\end{itemize}
	\item[(4)] Is every pre-syndetic (or just co-compact) subgroup of an amenable topological group amenable?
	\item[(5)] Let $X$ be a compact smooth manifold and $K$ a compact Lie group. Will the groups $C^{k}(X,K)$, $k\in\N\cup\{\infty\}$, and $H^k(X,K)$, $k\in\N$, $k>\dim X/2$, be \begin{itemize}
					\item[(a)] amenable (a question from \cite{CG}),
					\item[(b)] skew-amenable (a question from \cite{VP2020}),
					\item[(c)] M-M amenable?
				\end{itemize} Note that (c) implies (a). The positive answers to (c) and hence to (a) are known in the case $C^0$ when $\dim X=1$ \cite{MM}, and to (b), in the case $H^1$ when $\dim X=1$ \cite{VP2020}.
	\item[(6)] Recall that the {\em total variation distance} between two probability measures $\mu$ and $\nu$ on a measurable space $(X,{\mathcal A})$ is the value
	\[ \qquad \norm{\mu-\nu}_{TV}=\sup\{\abs{\mu(A)-\nu(A)}\colon A\in {\mathcal A}\}.\]
	A net $(\mu_\alpha)$ of regular Borel probability measures on a topological group $G$ is {\em asymptotically invariant under $g\in G$} if 
	\[ \qquad \norm{g\cdot\mu_{\alpha}-\mu_{\alpha}}_{TV}\to 0\mbox{ as }\alpha\to\infty.\]
	The existence of a net asymptotically invariant under elements of a dense subgroup of $G$ implies the M-M amenability of $G$. In particular, it was shown in \cite{MM} that the groups of continuous loops and paths admit a sequence of Borel probability measures asymptotically invariant under the action of $C^1$-loops / paths, respectively. 
	
\begin{itemize}
\item[(a)] Does every M-M amenable topological group admit a net of regular Borel probability measures asymptotically invariant under action of a dense subgroup?
\item[(b)]
	Does every M-M amenable Polish group $G$ admit a sequence of Borel probability measures asymptotically invariant under the action of a dense subgroup?
\end{itemize}
\end{enumerate}

\section*{Acknowledgments}

The authors would like to thank Vadim Alekseev and Andreas Thom for a discussion concerning Corollary~\ref{c:corol}, and Andy Zucker for information and references about pre-syndetic subgroups.

\section*{References}

\end{document}